\def\bold{\bf}
\def\ab{{\bold a}}
\def\tb{{\bold t}}
\def\xb{{\bold x}}
\def\0b{{\bold 0}}
\bmdefine{\Bzero}{0}
\bmdefine{\Bone}{1}
\def\Bone{{\bf 1}}
\def\tb{{\bf t}}
\def\RR{{\mathbb R}}
\def\ZZ{{\mathbb Z}}
\newtheorem{Theorem}{Theorem}[section]
\newtheorem{Lemma}[Theorem]{Lemma}
\newtheorem{Corollary}[Theorem]{Corollary}
\newtheorem{Proposition}[Theorem]{Proposition}
\newtheorem{Remark}[Theorem]{Remark}
\newtheorem{Example}[Theorem]{Example}
\begin{document}
\title{Toric rings and ideals of nested configurations}
\author{Hidefumi Ohsugi and Takayuki Hibi}
\subjclass[2000]{Primary:13P10, Secondary:52B20}
\begin{abstract}
The toric ring together with the toric ideal arising from a nested configuration is studied,
with particular attention given to the algebraic study of normality of the toric ring as well
as the Gr\"obner bases of the toric ideal.
One of the combinatorial applications of these
algebraic findings leads to insights on smooth $3 \times 3$ transportation polytopes.
\end{abstract}

\maketitle

\section*{Introduction}
Toric rings and toric ideals
play a central role
in combinatorial and computational aspects of
commutative algebra.
In \cite{AHOT2},
from a viewpoint of algebraic statistics,
the concept of nested configurations
was introduced.  In the present paper,  
the toric ring together with the toric ideal
arising from a nested configuration
will be studied in detail.

Let $K[\tb]=K[t_1, \ldots, t_d]$ denote 
the polynomial ring
in $d$ variables 
over a field $K$.
A {\em (point) configuration} of $K[\tb]$
is a finite set $A =\{ \tb^{{\bf a}_1},\ldots,  \tb^{{\bf a}_n} \}$
 of monomials belonging to $K[\tb]$
satisfying that
there exists a vector ${\bf w} \in \RR^d$ 
such that
${\bf w} \cdot {\bf a}_i = 1$ for all $1 \leq i \leq n$.
We will associate each configuration $A$ of $K[\tb]$
with the homogeneous semigroup ring
$K[A]$, called the {\em toric ring} of $A$, which is
the subalgebra of $K[\tb]$ generated by
the monomials belonging to $A$.
The toric ring $K[A]$ is called {\em normal}
if $K[A]$ is integrally closed in its field of fractions.
It is known that $K[A]$ is normal if and only if 
$
\ZZ_{\geq 0} \{ {\bf a}_1 ,\ldots, {\bf a}_n \}
=
\ZZ \{ {\bf a}_1 ,\ldots, {\bf a}_n \}
\cap 
{\mathbb Q}_{\geq 0} \{ {\bf a}_1 ,\ldots, {\bf a}_n \}
$.
See, e.g., \cite[Proposition 13.5]{Stu}.
In addition, $K[A]$ is called {\em very ample}
if 
$$
(\ZZ \{ {\bf a}_1 ,\ldots, {\bf a}_n \}
\cap 
{\mathbb Q}_{\geq 0} \{ {\bf a}_1 ,\ldots, {\bf a}_n \})
\setminus
\ZZ_{\geq 0} \{ {\bf a}_1 ,\ldots, {\bf a}_n \}
$$
is a finite set.
In particular, $K[A]$ is very ample if $K[A]$ is normal.

Let $K[\xb] = K[x_1,\ldots,x_n]$ denote
the polynomial ring over $K$ in $n$ variables 
with each $\deg(x_i) = 1$.
The {\em toric ideal} $I_A$ of $A$ 
is the kernel of the surjective homomorphism 
$\pi \, : \, K[\xb] \to K[A]$
defined by setting
$\pi(x_i) = \tb^{\ab_i}$ for each $1 \leq i \leq n $.
It is known (e.g., \cite[Section 4]{Stu})
that the toric ideal
$I_A$ is generated by those homogeneous
binomials $u - v$, where
$u$ and $v$ are monomials of $K[\xb]$, with
$\pi(u) = \pi(v)$.
Fix a monomial order $<$ on $K[\xb]$.
The {\em initial monomial} 
${\rm in}_<(f)$ of
$0 \neq f \in I_A$ with respect to $<$
is the biggest monomial appearing in $f$ with respect to $<$.
The {\em initial ideal} of $I_A$ with respect to $<$ is the ideal ${\rm in}_<(I_A)$
of $K[\xb]$ generated by all initial monomials ${\rm in}_<(f)$ 
with $0 \neq f \in I_A$.
An initial ideal ${\rm in}_<(I_A)$ is called {\em quadratic} (resp. {\em squarefree})
if ${\rm in}_<(I_A)$
is generated by quadratic (resp. squarefree) monomials.
Let, in general, ${\mathcal G}$ be a finite subset of
$I_A$ and write ${\rm in}_<({\mathcal G})$ for the ideal
$\left< {\rm in}_<(g) \ | \  g \in {\mathcal G} \right>$ of $K[X]$.
A finite set ${\mathcal G}$ of $I_A$ is said to be a
{\em Gr\"obner basis} of $I_A$ with respect to $<$ if 
${\rm in}_<({\mathcal G}) = {\rm in}_<(I_A)$.
It is known that a Gr\"obner basis of $I_A$ with respect to
$<$ always exists. 
Moreover, if ${\mathcal G}$ is a Gr\"obner basis of $I_A$, then 
$I_A$ is generated by
${\mathcal G}$.
A Gr\"obner basis ${\mathcal G}$ of $I_A$ is called {\em quadratic}
if ${\rm in}_<({\mathcal G})$ is quadratic.
We are interested in two implications below:
\begin{center}
$I_A$ has a squarefree initial ideal
$\Longrightarrow $
$K[A]$ is normal
$\Longrightarrow $
$K[A]$ is very ample;
\end{center}
\begin{center}
$I_A$ has a quadratic Gr\"obner basis
$\Longrightarrow $
$K[A]$ is Koszul
$\Longrightarrow $
$I_A$ is generated by quadratic binomials.
\end{center}
It is known that each of the converse of them is false
in general.
See, e.g., \cite{noneofwhose, graphquadratic}.

For the sake of simplicity, let $A =\{ \tb^{{\bf a}_1},\ldots,  \tb^{{\bf a}_n} \}$ 
be a configuration of
$K[{\bf t}]$ with the following properties:
\begin{itemize}
\item
$|{\bf a}_j| = r$ for each $1 \leq j \leq n$;
\item
$t_i$ divides the monomial $\tb^{{\bf a}_1} \cdots \tb^{{\bf a}_n}$
for each $1 \leq i \leq d$.
\end{itemize}
(Note that any configuration is isomorphic to such a configuration.)
Assume that, for each $1 \leq i  \leq d$, 
a configuration 
$B_i=\{m_1^{(i)},\ldots,m_{\lambda_i}^{(i)}\}$ 
of a polynomial ring 
$K[{\bf u}^{(i)}] = K[ u_1^{(i)},\ldots,u_{\mu_i}^{(i)}]$
in $\mu_i$ variables over $K$ is given.
Then the {\it nested configuration} \cite{AHOT2} 
arising from $A$ and $B_1,...,B_d$
is
the configuration
$$
A(B_1,\ldots,B_d) := 
\left\{ \left.
{m}_{j_1}^{(i_1)} \cdots {m}_{j_r}^{(i_r)} 
\ \right| \ 
t_{i_1} \cdots t_{i_r} \in A, \ \ 
1 \leq j_k \leq \lambda_{i_k} 
\mbox{ for } 1 \leq k \leq r
\right\}
$$
of the polynomial ring
$K[{\bf u}^{(1)},\ldots,{\bf u}^{(d)}]$
in $\sum_{i=1}^{d} \mu_i$ variables over $K$.
Here, $t_{i_1} \cdots t_{i_r} \in A$ is not necessarily squarefree.
If $A = \{t_1 t_2\}$, then $K[A(B_1, B_2)]$ is the Segre product of
$K[B_1]$ and $K[B_2]$.
Moreover, if $A=\{t_1^m\}$, then $K[A(B_1)]$ is the $m$-th Veronese subring
of $K[B_1]$.

\begin{Example}
{\em
Let $A=\{t_1^2,t_1 t_2\}$, $B_1= \{u_1^2,u_1u_2,u_2^2\}$ and
$B_2=\{v_1^2 v_2, v_1v_2^2\}$.
Then, the nested configuration $A(B_1,B_2)$ consists of the monomials
$$
u_1^4,
u_1^3u_2,
u_1^2u_2^2,
u_1u_2^3,
u_2^4, \ \ 
u_1^2   v_1^2 v_2,
u_1u_2  v_1^2 v_2,
u_2^2   v_1^2 v_2,
u_1^2   v_1v_2^2,
u_1u_2  v_1v_2^2,
u_2^2   v_1v_2^2.
$$
Then, the matrices
$$
M_A = 
\left(
\begin{array}{cc}
2 & 1\\
0 & 1
\end{array}
\right),
M_{B_1} = 
\left(
\begin{array}{ccc}
2 & 1 & 0\\
0 & 1 & 2
\end{array}
\right),
M_{B_2} = 
\left(
\begin{array}{cc}
2 & 1\\
1 & 2
\end{array}
\right),
$$
$$
M_{A(B_1,B_2)} = 
\left(
\begin{array}{ccccc|cccccc}
4 & 3 & 2 & 1 & 0 & 2 & 1 & 0 & 2 & 1 & 0\\
0 & 1 & 2 & 3 & 4 & 0 & 1 & 2 & 0 & 1 & 2\\
\hline
0 & 0 & 0 & 0 & 0 & 2 & 2 & 2 & 1 & 1 & 1\\
0 & 0 & 0 & 0 & 0 & 1 & 1 & 1 & 2 & 2 & 2
\end{array}
\right)
$$
correspond to the configurations $A$, $B_1$, $B_2$ and $A(B_1,B_2)$, respectively.
}
\end{Example}
One of the fundamental facts of the nested configuration is

\begin{Theorem}[\cite{AHOT2}]
\label{old}
If each of the toric ideals $I_A$, $I_{B_1}, \ldots , I_{B_d}$ 
possesses a quadratic
Gr\"obner basis,
then
the toric ideal $I_{A(B_1,\ldots,B_d) }$
possesses a quadratic Gr\"obner basis.
\end{Theorem}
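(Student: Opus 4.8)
\medskip\noindent\textbf{Proof plan.}\quad The plan is to construct an explicit quadratic Gr\"obner basis of $I := I_{A(B_1,\ldots,B_d)}$ out of the given quadratic Gr\"obner bases $\Gc_A$ of $I_A$ (with respect to a monomial order $<_A$) and $\Gc_{B_i}$ of $I_{B_i}$ (with respect to a monomial order $<_i$), $1 \le i \le d$, and then to verify the Gr\"obner basis property by Buchberger's criterion. Let $S = K[\By]$ be the polynomial ring whose variables $y_{\Bm}$ are indexed by the distinct monomials $\Bm \in A(B_1,\ldots,B_d)$, and let $\pi \colon S \to K[A(B_1,\ldots,B_d)]$ be the toric surjection. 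A variable $y_{\Bm}$ records a ``decorated'' element of $A$: an element $\tb^{\ab} = t_{i_1}\cdots t_{i_r} \in A$ together with, for each of the $r$ occurrences of the $t$'s, a choice of generator $m^{(i_k)} \in B_{i_k}$. Hence a quadratic monomial $y_{\Bm}y_{\Bm'}$ carries an \emph{$A$-content} $\ab + \ab'$, together with, for each block $i$, the multiset of $B_i$-generators attached to the $\ab_i + \ab'_i$ occurrences of $t_i$. Since the blocks $B_1,\ldots,B_d$ involve pairwise disjoint sets of variables and each $B_i$ is a configuration (so all its generators have the same weight for the relevant weight vector), the $A$-content of any monomial of $S$ is already determined by its $\pi$-image; consequently two quadratic monomials of $S$ have the same $\pi$-image exactly when they share the same $A$-content and, block by block, their block-$i$ multisets define the same monomial of $K[\Bu^{(i)}]$.

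This fiber description singles out three families of quadratic binomials in $I$. The \emph{type-$(A)$} binomials are the quadratic binomials of $I_A$, each \emph{decorated} by attaching (consistently on both sides, which is possible because the two sides have equal $A$-content) a $B_i$-generator to every $t$-occurrence; these regroup which $t$-occurrences are bundled into which element of $A$. The \emph{type-$(B_i)$} binomials are the quadratic binomials of $I_{B_i}$ decorated by the ambient $A$-content and by the data of the other blocks; these rewrite a block-$i$ multiset. Finally, the \emph{Segre-type} binomials interchange the block-$i$ decorations carried by two $y$-variables whose underlying $A$-elements both involve $t_i$; this family cannot be omitted, since the case $A = \{t_1 t_2\}$ already produces the Segre product of $K[B_1]$ and $K[B_2]$, whose defining ideal contains the $2 \times 2$ minors $y_{\Bm}y_{\Bm'} - y_{\Bm''}y_{\Bm'''}$. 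Using the description (quoted in the Introduction) of toric ideals by binomials $u - v$ with $\pi(u) = \pi(v)$, and using that $I_A$ and the $I_{B_i}$ are already generated by quadratic binomials, one checks that the union $\Gc$ of these three families generates $I$: given $u - v$ with $\pi(u) = \pi(v)$, first match the multisets of underlying $A$-elements of $u$ and $v$ by type-$(A)$ moves, then match the block multisets by type-$(B_i)$ and Segre-type moves.

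Next I would fix a monomial order $<$ on $S$ compatible with all the given orders at once --- an elimination (block) order that compares monomials of $S$ first by their $A$-content and grouping, breaking ties by $<_A$ on the corresponding monomial of the ring presenting $K[A]$, then by the block data, breaking ties by $<_1,\ldots,<_d$ in turn on the corresponding monomials of the rings presenting $K[B_1],\ldots,K[B_d]$, and finally by an anti-diagonal rule selecting the initial term of each Segre-type binomial. With respect to this $<$, every element of $\Gc$ is quadratic with the ``expected'' quadratic initial monomial inherited from $\Gc_A$, from some $\Gc_{B_i}$, or from a minor; in particular ${\rm in}_<(\Gc)$ is generated by quadratic monomials. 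It then remains to check that the $S$-polynomial of each pair of elements of $\Gc$ reduces to $0$ modulo $\Gc$, which I would organize by the types of the two binomials. When both are type-$(A)$, or both type-$(B_i)$ for a fixed $i$, or both Segre-type at a fixed block, the reduction is obtained by \emph{lifting} the reduction guaranteed by the Gr\"obner basis property of $\Gc_A$, of $\Gc_{B_i}$, or of the classical diagonal Gr\"obner basis of the relevant Segre-type ideal, carrying the untouched decoration along unchanged. When the two binomials have different types --- type-$(A)$ against type-$(B_i)$, type-$(B_i)$ against type-$(B_j)$ with $i \ne j$, or any of these against a Segre-type move --- they act on essentially disjoint parts of the combinatorial data; if their initial monomials are coprime the $S$-polynomial reduces by Buchberger's product criterion, and if they share a variable the two rewrites commute, so the $S$-polynomial reduces to $0$ by applying the two moves in the opposite order.

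The main obstacle is exactly this last point: one must design $<$ so that all three kinds of rewriting decrease it and so that every overlap between binomials of different types is confluent. Giving $<_A$ top priority in the block order decouples the type-$(A)$ moves cleanly; the delicate overlaps are those between type-$(B_i)$ moves and Segre-type moves at the same block $i$, which together have to reproduce a quadratic Gr\"obner basis of the Segre--Veronese-type configuration built from $B_i$ sitting inside $A(B_1,\ldots,B_d)$, along with the bookkeeping needed to see that all the decorated binomials are well defined once the variables of $S$ are identified with the \emph{distinct} monomials of $A(B_1,\ldots,B_d)$ rather than with decorated elements of $A$. Once these overlap computations are arranged to reduce to instances already covered by $\Gc_A$, by some $\Gc_{B_i}$, or by a classical Segre/Veronese Gr\"obner basis, Buchberger's criterion holds, $\Gc$ is a quadratic Gr\"obner basis of $I$, and the theorem follows.
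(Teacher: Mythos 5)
Your candidate basis is essentially the one the paper itself uses: your type-$(A)$, type-$(B_i)$ and Segre-type families correspond to the families (1), (4) and (2)--(3) in Theorems \ref{pcase} and \ref{maincase}, which specialize to the quoted statement when all the input Gr\"obner bases are quadratic. The two steps you defer, however, are exactly the hard content, and as written they are gaps. First, the existence of a monomial order on your ring $S$ with the prescribed leading terms cannot be obtained by decreeing ``compare $\varphi_0$-images by $<_A$, then block data by $<_i$, then an anti-diagonal tie-break'': the straightening/sorting relations (your Segre-type moves, and the pair-sorting relations that are unavoidable when a monomial of $A$ is not squarefree, so that the same element of $A$ decorates two variables, as in the Veronese case $A=\{t_1^m\}$) are precisely the relations that are not compatible with naive block or lexicographic orders; producing an order that selects the unsorted side as the initial term is the nontrivial point behind Sturmfels' sorting order (Example \ref{sturmfelssort}). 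The paper sidesteps this by marking the intended initial monomials, proving that the resulting reduction relation is Noetherian (strict $<_0$-decrease of the $\varphi_0$-image for moves of type (1), an inversion count for type (3), termination of sorting for type (2), and $<_i$-decrease for type (4)), and then invoking \cite[Theorem 3.12]{Stu} to conclude that a compatible monomial order exists. Note also that single swaps between two variables do not suffice when both variables carry the same underlying element of $A$; there the paper needs the full pair-sort binomials of family (2).

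Second, your Buchberger step is asserted rather than proved: ``if they share a variable the two rewrites commute'' is not true as stated -- a type-$(A)$ move regroups the decorations carried by two variables, after which a type-$(B_i)$ move on one of those variables need not apply verbatim, so confluence of the mixed overlaps (and of the type-$(B_i)$/Segre overlaps you yourself flag) requires a genuine argument that is missing. The paper avoids S-pair computations altogether: once the order exists, it takes a binomial $f=p_1-p_2\in I_{A(B_1,\ldots,B_d)}$ with both terms standard and uses the homomorphisms $\varphi_0,\varphi_1,\ldots,\varphi_d$ together with Lemma \ref{keylemma} (and the lifting Lemmas \ref{blemma} and \ref{alemma}): standardness with respect to (1) forces $\varphi_0(p_1)=\varphi_0(p_2)$, standardness with respect to (4) forces $\varphi_i(p_1)=\varphi_i(p_2)$ for every $i$, and standardness with respect to (2) and (3) then forces $p_1=p_2$, hence $f=0$. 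To complete your plan you would have to carry out both the order construction and the overlap analysis in full; alternatively, replacing those two deferred steps by the Noetherian-reduction-plus-$\varphi$ argument is exactly the route the paper takes.
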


In Section 1, we study the normality of the toric ring arising
from a nested configuration.
Our first main result is Theorem \ref{normality}:
if each of
$K[A], K[B_1], \ldots, K[B_d]$
are normal then
$K[A(B_1, \ldots, B_d)]$ is also normal.
In general -- see Example \ref{ex13} -- the converse does
not hold.
However,
Corollary \ref{introduction}
guarantees
that, when $A$ consists of squarefree monomials,
each of $K[A], K[B_1], \ldots, K[B_d]$
is normal if and only if
$K[A(B_1, \ldots, B_d)]$ is normal.

In Section $2$,
we study Gr\"obner bases
of the toric ideal
arising from a nested configuration.
A natural generalization of Theorem \ref{old} 
will be obtained.  In fact, 
Theorem \ref{pcase} together with
Theorem \ref{maincase}
guarantees that
if each of
$I_A, I_{B_1}, \ldots, I_{B_d}$
possesses a Gr\"obner basis consisting of
binomials of degree at most $p$, then
$I_{A(B_1, \ldots, B_d)}$
possesses a Gr\"obner basis consisting of
binomials of degree at most $\max ( 2,p )$.
Moreover, 
if each of
$I_A, I_{B_1}, \ldots, I_{B_d}$
possesses a squarefree initial ideal,
then
$I_{A(B_1, \ldots, B_d)}$
possesses a squarefree initial ideal.

In Section $3$,
as one of the combinatorial applications of
our algebraic theory of nested configurations,
we discuss the toric ideal of a multiple
of the Birkhoff polytope ${\mathcal B}_3$.
Here ${\mathcal B}_3$ is the convex hull of
$$
\sigma_1=
\left(
\begin{array}{ccc}
1 & 0 & 0\\
0 & 1 & 0\\
0 & 0 & 1
\end{array}
\right),
\sigma_2=
\left(
\begin{array}{ccc}
0 & 1 & 0\\
0 & 0 & 1\\
1 & 0 & 0
\end{array}
\right),
\sigma_3=
\left(
\begin{array}{ccc}
0 & 0 & 1\\
1 & 0 & 0\\
0 & 1 & 0
\end{array}
\right),
$$
$$
\sigma_4=
\left(
\begin{array}{ccc}
1 & 0 & 0\\
0 & 0 & 1\\
0 & 1 & 0
\end{array}
\right),
\sigma_5=
\left(
\begin{array}{ccc}
0 & 1 & 0\\
1 & 0 & 0\\
0 & 0 & 1
\end{array}
\right),
\sigma_6=
\left(
\begin{array}{ccc}
0 & 0 & 1\\
0 & 1 & 0\\
1 & 0 & 0
\end{array}
\right)
$$
in $\RR^{3 \times 3}$.
The toric ideal of ${\mathcal B}_3$ is the toric ideal
of the configuration
$$
B_1
=
\{
u_{11} u_{22} u_{33},
u_{12} u_{23} u_{31},
u_{13} u_{21} u_{32},
u_{11} u_{23} u_{32},
u_{12} u_{21} u_{33},
u_{13} u_{22} u_{31}
\}$$
of polynomial ring
$K[u_{11},\ldots, u_{33}]$
and it is
a principal ideal
generated by $z_1z_2z_3-z_4z_5z_6$.
Given an integer $m \geq 1$,
$m$ multiple of ${\mathcal B}_3$ is defined by
$m {\mathcal B}_3 = \{ m \alpha  \ | \ \alpha \in {\mathcal B}_3 \}$.
Since it is well-known (due to Birkhoff) that
$$
m {\mathcal B}_3 \cap \ZZ^{3 \times 3}=
\{ \sigma_{i_1} + \cdots + \sigma_{i_m}
\ | \ 
1 \leq i_1, \ldots, i_m \leq 6
\},
$$
the toric ideal of $m {\mathcal B}_3$ is the toric ideal of
the nested configuration $A(B_1)$ where $A=\{t_1^m\}$.
In \cite{Haase}, they say that L. Piechnik and C. Haase proved 
that
the toric ideal of the multiple $2 n {\mathcal B}_3$ possesses a squarefree quadratic
initial ideal for $n > 1$.
This fact is directly obtained by Theorem \ref{maincase} since
the toric ideal of the multiple $2  {\mathcal B}_3$ possesses a squarefree quadratic
initial ideal.
Similarly,
since
the toric ideal of the multiple $3  {\mathcal B}_3$ possesses a squarefree quadratic
initial ideal, Theorem \ref{maincase} guarantees that 
the toric ideal of the multiple $3 n {\mathcal B}_3$ possesses a squarefree quadratic
initial ideal for $n > 1$.
However, since there are infinitely many prime numbers,
it is difficult to show the existence of a squarefree quadratic
initial ideal of the toric ideal of 
$m {\mathcal B}_3$ for all $m >1$ in this way.
In Theorem \ref{Birkhoff}, using another monomial order,
we will prove that  
the toric ideal of the multiple
$m {\mathcal B}_3$
possesses a quadratic Gr\"obner basis
for all $m > 1$.

In Section $4$, we give
a summary of our algebraic theory of nested configurations.

\section{Normality of toric rings of nested configurations}

The purpose of this section is to study
normality of $K[A(B_1,\ldots,B_d)]$.

\begin{Lemma}[\cite{Hoch}]
\label{Hoch}
The toric ring $K[A]$ is normal if and only if 
$$
\left\{
\left.
\frac{M_1}{M_2} 
\ \right| \ 
M_1, M_2 \in K[A] \mbox{ are monomials and }
\left(\frac{M_1}{M_2}\right)^m \in K[A]
\mbox{ for some } 0 < m \in \ZZ
\right\}
$$
is a subset of $K[A]$.
\end{Lemma}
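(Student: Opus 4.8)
The plan is to prove the two implications separately, the forward one being essentially free and the converse requiring the affine semigroup picture. For ``$\Rightarrow$'': if $K[A]$ is normal and $M_1, M_2 \in K[A]$ are monomials with $(M_1/M_2)^m \in K[A]$ for some $0 < m \in \ZZ$, then $M_1/M_2$ is an element of the field of fractions of $K[A]$ that is a root of the monic polynomial $X^m - (M_1/M_2)^m$ over $K[A]$, hence integral over $K[A]$; normality then gives $M_1/M_2 \in K[A]$ directly.

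For ``$\Leftarrow$'' I would pass to the affine semigroup $S = \ZZ_{\geq 0}\{\ab_1, \ldots, \ab_n\}$ and the group $G = \ZZ\{\ab_1, \ldots, \ab_n\}$ it generates, so that $K[A] = \bigoplus_{{\bf s} \in S} K\tb^{{\bf s}}$ is a $G$-graded (``monomial'') subring of the Laurent polynomial ring $K[G] = \bigoplus_{{\bf x} \in G} K\tb^{{\bf x}}$. Since $K[G]$ is a localization of $K[A]$ (invert the generators $\tb^{\ab_i}$), the two rings share a field of fractions, and since $K[G]$ is a normal domain, the integral closure $\CT$ of $K[A]$ in that field is contained in $K[G]$. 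The key step is then to observe that $\CT$, being the integral closure of a $G$-graded subring inside a $G$-graded domain, is again $G$-graded, so $\CT = \bigoplus_{{\bf x} \in \bar S} K\tb^{{\bf x}}$ where $\bar S := \{{\bf x} \in G \mid \tb^{{\bf x}} \in \CT\}$ is a submonoid of $G$ containing $S$.

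The next step is to identify $\bar S$ as $\{{\bf x} \in G \mid m{\bf x} \in S \text{ for some } 0 < m \in \ZZ\}$. One inclusion is clear, since $m{\bf x} \in S$ means $(\tb^{{\bf x}})^m \in K[A]$. For the other, if $\tb^{{\bf x}} \in \CT$ then $K[A][\tb^{{\bf x}}] = \sum_{i=0}^{n-1} K[A]\tb^{i{\bf x}}$ is a finitely generated $K[A]$-module, so every $\tb^{k{\bf x}}$ can be written as $\sum_{i=0}^{n-1} c_i\tb^{i{\bf x}}$ with $c_i \in K[A]$; comparing coefficients of $\tb^{k{\bf x}}$ inside $K[G]$ forces $(k-i){\bf x} \in S$ for some $0 \leq i \leq n-1$, and choosing $k$ large produces a positive $m$ with $m{\bf x} \in S$.

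Finally, since every ${\bf x} \in G$ is a difference of two exponent vectors coming from $S$, writing $M_1/M_2 = \tb^{{\bf x}}$ shows that the set in the statement is exactly $\{\tb^{{\bf x}} \mid {\bf x} \in \bar S\}$; it is contained in $K[A]$ if and only if $\bar S = S$, that is, if and only if $\CT = K[A]$, i.e.\ $K[A]$ is normal. The main obstacle is the claim that $\CT$ is $G$-graded: this is the standard fact that the integral closure of a graded ring in a graded overdomain is graded (equivalently, that it is stable under the natural torus action on $K[G]$), and it is really the heart of Hochster's criterion.
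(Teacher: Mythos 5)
Your argument is correct, but it follows a genuinely different route from the paper's. The paper proves the lemma by translating it into the lattice-point criterion already quoted in the Introduction from Sturmfels, Proposition 13.5 ($K[A]$ is normal iff $\ZZ_{\geq 0}\{{\bf a}_1,\ldots,{\bf a}_n\}=\ZZ\{{\bf a}_1,\ldots,{\bf a}_n\}\cap \QQ_{\geq 0}\{{\bf a}_1,\ldots,{\bf a}_n\}$): for $\alpha \in \ZZ\{{\bf a}_i\}\cap\QQ_{\geq 0}\{{\bf a}_i\}$ the integer representation of $\alpha$ yields $\tb^{\alpha}=M_1/M_2$ with $M_1,M_2$ monomials of $K[A]$, and clearing denominators in the rational representation yields $(M_1/M_2)^m\in K[A]$, so after this bookkeeping with exponent vectors the containment of the displayed set in $K[A]$ becomes equivalent to the quoted criterion. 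You instead work straight from the definition of normality as integral closedness: the forward implication via the monic equation $X^m-(M_1/M_2)^m$, and the converse by embedding $K[A]$ into the Laurent ring $K[G]$, invoking the standard fact that the integral closure of a $G$-graded subring of a $G$-graded domain is again $G$-graded, and identifying its graded support with the saturation $\bar S=\{{\bf x}\in G \mid m{\bf x}\in S \mbox{ for some } 0<m\in\ZZ\}$ -- in effect you reprove the Hochster/Sturmfels criterion rather than cite it. Both routes are sound: the paper's is shorter because the hard content is outsourced to the cited criterion, while yours is self-contained modulo the graded-integral-closure fact, which you rightly identify as the crux (note that the torus-action reformulation needs $K$ infinite, whereas the graded statement holds over any field). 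Two small points of hygiene: your $n$ for the degree of the integral dependence relation collides with $n=\sharp|A|$, and in the final step you should say explicitly that a Laurent monomial $\tb^{\bf x}$ lies in $K[A]$ iff ${\bf x}\in S$, which is what converts containment of the set in $K[A]$ into $\bar S=S$.
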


\begin{Theorem}
\label{normality}
If $K[A]$, $K[B_1], \ldots,K[B_d]$ are normal,
then
$K[A(B_1,\ldots,B_d)]$ is normal.
\end{Theorem}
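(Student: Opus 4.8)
The plan is to use Hochster's criterion (Lemma~\ref{Hoch}): I must show that whenever $N_1, N_2$ are monomials in $K[A(B_1,\ldots,B_d)]$ with $(N_1/N_2)^m \in K[A(B_1,\ldots,B_d)]$ for some $m > 0$, the quotient $N_1/N_2$ itself lies in $K[A(B_1,\ldots,B_d)]$. The essential structural observation is that a monomial in $K[A(B_1,\ldots,B_d)]$ is a product $m_{j_1}^{(i_1)} \cdots$ of generators, each of which lives entirely inside one of the variable blocks $K[\Bu^{(i)}]$; so any monomial $N$ in the subalgebra has a block decomposition $N = \prod_{i=1}^d N^{(i)}$ with $N^{(i)} \in K[\Bu^{(i)}]$, and moreover each $N^{(i)}$ must itself be a product of generators from $B_i$, i.e. $N^{(i)} \in K[B_i]$. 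The ``outer'' datum attached to $N$ is the multiset of indices recording how many factors come from each block; this multiset, read as an exponent vector, records a monomial in $K[A]$ (the nested structure forces $t_{i_1}\cdots t_{i_r} \in A$ on each generator, and products of such give monomials in $K[A]$).

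The key steps, in order: (1) Write $N_1 = \prod_i N_1^{(i)}$ and $N_2 = \prod_i N_2^{(i)}$ with $N_\ell^{(i)} \in K[B_i]$; since $K[A(B_1,\ldots,B_d)]$ is $\ZZ^{\sum \mu_i}$-graded and the blocks use disjoint variables, the hypothesis $(N_1/N_2)^m \in K[A(B_1,\ldots,B_d)]$ forces, for each $i$, that $(N_1^{(i)}/N_2^{(i)})^m$ is the $i$-block component of an element of $K[A(B_1,\ldots,B_d)]$, hence lies in $K[B_i]$. (2) Apply Lemma~\ref{Hoch} to each normal ring $K[B_i]$ to conclude $N_1^{(i)}/N_2^{(i)} \in K[B_i]$; this produces an expression of $N_1^{(i)}/N_2^{(i)}$ as a product of generators from $B_i$, and in particular tells us the \emph{number} of such generators, say $c_i \in \ZZ$ (possibly negative), where $\sum_i c_i$ is constant across $i$-independent bookkeeping and equals $\deg N_1 - \deg N_2$ divided by the common degree. (3) Now handle the outer configuration: the vector recording the block-counts of $N_1$ is a monomial $u_1 \in K[A]$ and that of $N_2$ is a monomial $u_2 \in K[A]$, and $(u_1/u_2)^m$ (encoding the block counts of $(N_1/N_2)^m$, which is in $K[A(B_1,\ldots,B_d)]$) is a monomial in $K[A]$; by Lemma~\ref{Hoch} applied to the normal ring $K[A]$, $u_1/u_2 \in K[A]$, so $u_1/u_2 = \tb^{\ab_{k_1}} \cdots \tb^{\ab_{k_s}}$ for some generators of $A$. (4) Finally, reassemble: the $K[A]$-factorization in step (3) tells us which blocks to draw generators from and how many from each, and the $K[B_i]$-factorizations in step (2) tell us which generators of $B_i$ to use; multiplying the chosen nested generators $m_{j}^{(i)}$ together — matching the block $t_i$ that appears in each chosen generator of $A$ with a chosen generator of $B_i$ — yields exactly $N_1/N_2$, exhibiting it as a monomial of $K[A(B_1,\ldots,B_d)]$.

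The main obstacle is step (4), the reassembly: one must check that the counts produced independently by the three applications of Lemma~\ref{Hoch} are mutually consistent, i.e. that the number of generators of $B_i$ demanded by step (2) agrees with the number of occurrences of the variable $t_i$ in the word from $K[A]$ produced in step (3), \emph{including signs} when $N_1/N_2$ is a genuine quotient rather than a polynomial. Handling signs cleanly is best done by first reducing to the case where $N_1, N_2$ share no common factor and then arguing that all the relevant exponents are in fact nonnegative, or alternatively by phrasing the whole argument additively at the level of semigroups: letting $\sigma$ denote the semigroup generated by the exponent vectors of $A(B_1,\ldots,B_d)$, one shows $\sigma = \{ \text{integer points} \} \cap \{ \text{rational cone} \}$ by lifting a point of the right-hand side, projecting to the $A$-semigroup and to each $B_i$-semigroup, invoking normality there, and re-lifting. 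The bookkeeping that the partial lifts glue is exactly the content of the nested construction and should go through once the block decomposition in step (1) is set up carefully.
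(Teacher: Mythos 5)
Your proposal is correct and follows essentially the same route as the paper's proof: apply Hochster's criterion, project onto $K[A]$ (via block counts, the paper's map $\psi$) and onto each $K[B_i]$ (via the block decomposition, the paper's maps $\rho_k$), invoke normality of each factor, and reassemble using the fact that $A$ and the $B_i$ are configurations, so the number of generators in any factorization is determined and the counts automatically match (in particular the ``sign'' worry disappears, since each projected quotient is already a genuine product of generators). The consistency check you flag in step (4) is exactly the paper's one-line degree computation $\psi(M_1)/\psi(M_2)=t_1^{q_1}\cdots t_d^{q_d}$.
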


\begin{proof}
Suppose that $K[A]$, $K[B_1], \ldots,K[B_d]$ are normal
and that $K[A(B_1,\ldots,B_d)]$ is not normal.
Thanks to Lemma \ref{Hoch},
there exist monomials $M_1, M_2, M_3$
belonging to $K[A(B_1,\ldots,B_d)]$ such that
$M_1/M_2 \notin K[A(B_1,\ldots,B_d)]$ and that
$(M_1/M_2)^n = M_3$ for some integer $n > 1$.

Let $\psi :  K[A(B_1,\ldots,B_d)] \rightarrow K[A]$ be the surjective homomorphism
defined by $\psi ({m}_{j_1}^{(i_1)} \cdots {m}_{j_r}^{(i_r)} ) =  t_{i_1} \cdots t_{i_r} \in A$.
Then $\psi ( M_1), \psi(M_2) \in K[A]$
and
$$(\psi (M_1)/ \psi(M_2))^n = \psi(M_3) \in K[A].$$
Since $K[A]$ is normal, we have 
$\psi (M_1)  / \psi (M_2)   \in K[A]$.
Thus 
$\psi (M_1)/\psi (M_2) = \tb^{{\bf a}_{i_1}} \cdots \tb^{{\bf a}_{i_p}}$
for some $1 \leq i_1 , \ldots, i_p \leq n$.

Let
$\rho_k : K[A(B_1,\ldots,B_d)] \rightarrow K[B_k]$
be the surjective homomorphism defined by
$$
\rho_k(u_j^{(i)})
=
\left\{
\begin{array}{cl}
u_j^{(i)} & \mbox{if } i=k\\
 & \\
1 & \mbox{otherwise.}
\end{array}
\right.
$$
Then $\rho_k ( M_1), \rho_k (M_2) \in K[B_k]$
and
$(\rho_k (M_1)/ \rho_k (M_2))^n = \rho_k (M_3) \in K[B_k]$.
Since $K[B_k]$ is normal,
$\rho_k (M_1)/\rho_k (M_2) \in K[B_k]$.
Thus
$\rho_k (M_1)/\rho_k (M_2) = m_{j_1}^{(k)} \cdots m_{j_{q_k}}^{(k)} $
for some $1 \leq j_1 , \ldots, j_{q_k} \leq \lambda_k$.
Since $B_k$ is a configuration, it follows that
$\rho_k (M_1)= m_{u_1}^{(k)} \cdots m_{u_{q_k + r_k}}^{(k)}$ and
$\rho_k (M_2)=m_{v_1}^{(k)} \cdots m_{v_{r_k}}^{(k)}$.
Then
$\psi (M_1) = t_1^{q_1 + r_1} \cdots t_d^{q_d + r_d}$
and
$\psi (M_2) = t_1^{r_1} \cdots t_d^{r_d}$.
Thus we have
$$
\frac{\psi (M_1)}{\psi (M_2)} =\tb^{{\bf a}_{i_1}} \cdots \tb^{{\bf a}_{i_p}}
= t_1^{q_1} \cdots t_d^{q_d}.
$$
Hence 
$M_1/M_2 \in K[A(B_1,\ldots,B_d)]$ and this is a contradiction.
\end{proof}

The converse of Theorem \ref{normality} is false in general.

\begin{Example}
\label{ex13}
{\rm
Let $A = \{t_1^2\}$ and $B_1 = \{v, uv, u^3 v, u^4 v\}$.
Then $K[B_1]$ is not normal.
However, $I_{A(B_1)}$ has a squarefree quadratic initial ideal
and hence $K[A(B_1)] =K[\{u^i v^2 \ | \ i = 0,1,\ldots,8  \} ]$ is normal.
}
\end{Example}

Let $A =\{ \tb^{{\bf a}_1},\ldots,  \tb^{{\bf a}_n} \}$ be a configuration.
Then $K[A]$ is
called {\em very ample} if 
$$
(\ZZ \{ {\bf a}_1 ,\ldots, {\bf a}_n \}
\cap 
{\mathbb Q}_{\geq 0} \{ {\bf a}_1 ,\ldots, {\bf a}_n \})
\setminus
\ZZ_{\geq 0} \{ {\bf a}_1 ,\ldots, {\bf a}_n \}
$$
is a finite set.
In particular, $K[A]$ is very ample if $K[A]$ is normal.
Theorem \ref{normality} did not hold when we replaced ``normal" with ``very ample." 

\begin{Example}
{\rm
Let $A = \{t_1,t_2\}$, $B_1 = \{v, uv, u^3 v, u^4 v\}$ and $B_2=\{w\}$.
Then $K[A]$ and $K[B_2]$ are polynomial rings.
On the other hand, $K[B_1]$ is very ample, but not normal.
However, $K[A(B_1,B_2)] =K[v, uv, u^3 v, u^4 v,w]$ is not very ample.
In fact, the monomial $u^2 v w^\alpha$ does not belong
to $K[A(B_1,B_2)]$ for all $\alpha \in \ZZ_{\geq 0}$.
}
\end{Example}

Let $P_A$ denote the convex hull of $\{ {\bf a} \in {\mathbb Z}_{\geq 0}^d \ | \ {\bf t}^{\bf a} \in A \}$.
For a subset $B \subset A$,
$K[B]$ is called the
{\it combinatorial pure subring} (\cite{cpure, geom}) of $K[A]$
if there exists a face $F$ of $P_A$ such that 
$ 
\{ {\bf b} \in {\mathbb Z}_{\geq 0}^d \ | \ {\bf t}^{\bf b} \in B \}  =  \{ {\bf a} \in {\mathbb Z}_{\geq 0}^d \ | \ {\bf t}^{\bf a} \in A \}
\cap F.$
For example, if $B = A \cap K[ t_{i_1},\ldots,t_{i_s} ]$
for some $1 \leq i_1 < \cdots < i_s \leq d$, then
$K[B]$ is a combinatorial pure subring of $K[A]$.
(This is the original definition of a combinatorial pure subring
in \cite{cpure}.)

\begin{Lemma}
The toric ring $K[A(B_1,\ldots,B_d)]$ has a combinatorial pure subring
which is isomorphic to $K[A]$.
\end{Lemma}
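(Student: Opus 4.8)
The plan is to realize the claimed subring as a ``constant cross--section'' of the nested configuration and to produce the face of $P := P_{A(B_1,\ldots,B_d)}$ that cuts it out. Throughout write $\mathbf a(M)$ for the exponent vector of a monomial $M$. For each $i$ fix an element $\bar m^{(i)} \in B_i$ such that $\mathbf a(\bar m^{(i)})$ is a vertex of $\mathrm{conv}\{\mathbf a(m_j^{(i)}) : 1 \le j \le \lambda_i\}$; a finite point set always has a vertex, so such a choice exists. Put
\[
\bar C \ :=\ \{\, \bar m^{(i_1)} \cdots \bar m^{(i_r)} \ \mid\ t_{i_1}\cdots t_{i_r} \in A \,\} \ \subseteq\ A(B_1,\ldots,B_d).
\]
First I would verify $K[\bar C] \cong K[A]$: the $K$-algebra homomorphism $K[t_1,\ldots,t_d] \to K[{\bf u}^{(1)},\ldots,{\bf u}^{(d)}]$ with $t_i \mapsto \bar m^{(i)}$ carries the generators $\tb^{{\bf a}_1},\ldots,\tb^{{\bf a}_n}$ of $K[A]$ onto the generators of $K[\bar C]$, and it is injective on $K[A]$ because the map $\tb^{\mathbf c} \mapsto \prod_i (\bar m^{(i)})^{c_i}$ on monomials is injective: the ${\bf u}^{(i)}$--part of the image determines $(\bar m^{(i)})^{c_i}$, hence $c_i$, since each $\bar m^{(i)}$ is a non-constant monomial (no element of a configuration equals $1$). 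Thus the monomial $K$--basis of $K[A]$ goes bijectively to the monomial $K$--basis of $K[\bar C]$.

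The substantive step is exhibiting the face. Let $\mathbf w \in \RR^d$ satisfy $\mathbf w \cdot {\bf a} = 1$ for all $\tb^{{\bf a}} \in A$, and let $\mathbf w^{(i)} \in \RR^{\mu_i}$ satisfy $\mathbf w^{(i)} \cdot \mathbf a(m_j^{(i)}) = 1$ for all $j$. Since $\mathbf a(\bar m^{(i)})$ is a vertex, choose a linear functional $\phi^{(i)}$ on $\RR^{\mu_i}$ with $\phi^{(i)}(\mathbf a(\bar m^{(i)})) > \phi^{(i)}(\mathbf a(m_j^{(i)}))$ for every $m_j^{(i)} \in B_i \setminus \{\bar m^{(i)}\}$, set $M_i := \phi^{(i)}(\mathbf a(\bar m^{(i)}))$, and define
\[
\ell^{(i)} \ :=\ \phi^{(i)} + (w_i - M_i)\,\mathbf w^{(i)} ,
\]
so that $\ell^{(i)}(\mathbf a(\bar m^{(i)})) = w_i$ while $\ell^{(i)}(\mathbf a(m_j^{(i)})) < w_i$ for $m_j^{(i)} \neq \bar m^{(i)}$. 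Let $\ell$ be the linear functional on $\RR^{\mu_1 + \cdots + \mu_d}$ whose restriction to the $i$-th coordinate block is $\ell^{(i)}$. For a monomial $\nu = m_{j_1}^{(i_1)}\cdots m_{j_r}^{(i_r)}$ of $A(B_1,\ldots,B_d)$ (so $t_{i_1}\cdots t_{i_r} \in A$), the $i$-th block of $\mathbf a(\nu)$ equals $\sum_{k:\, i_k = i} \mathbf a(m_{j_k}^{(i_k)})$, whence
\[
\ell(\mathbf a(\nu)) \ =\ \sum_{k=1}^r \ell^{(i_k)}\bigl(\mathbf a(m_{j_k}^{(i_k)})\bigr) \ \le\ \sum_{k=1}^r w_{i_k} \ =\ \mathbf w \cdot \mathbf a(t_{i_1}\cdots t_{i_r}) \ =\ 1 ,
\]
with equality precisely when $m_{j_k}^{(i_k)} = \bar m^{(i_k)}$ for all $k$, that is, precisely when $\nu \in \bar C$. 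Hence $\ell \le 1$ at every point $\mathbf a(\nu)$, $\nu \in A(B_1,\ldots,B_d)$, and therefore on their convex hull $P$; since $A \neq \emptyset$ the value $1$ is attained, so $F := P \cap \{\ell = 1\}$ is a face of $P$, and $\{\mathbf a(\nu) : \nu \in A(B_1,\ldots,B_d)\} \cap F = \{\mathbf a(\nu) : \nu \in \bar C\}$. This presents $K[\bar C]$ as a combinatorial pure subring of $K[A(B_1,\ldots,B_d)]$, and by the first step $K[\bar C] \cong K[A]$.

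The part requiring care, beyond routine bookkeeping, is the interplay of the two weight vectors $\mathbf w$ and $\mathbf w^{(i)}$. One must take $\bar m^{(i)}$ to be a \emph{vertex} of $\mathrm{conv}(B_i)$, since a non-vertex element of $B_i$ cannot be isolated by any face; and the correction term $(w_i - M_i)\mathbf w^{(i)}$ is exactly what raises the block-wise maximum of $\ell^{(i)}$ on $B_i$ to the common value $w_i$, so that the $r$ block maxima always sum to the \emph{same} total $1$, independently of which monomial of $A$ is being used. This uniformity is what lets a single supporting hyperplane of the large polytope $P$ simultaneously select the distinguished element of $B_{i_k}$ in every factor of every monomial coming from $A$.
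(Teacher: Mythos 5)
Your proposal is correct and takes essentially the same route as the paper: choose a monomial $\sigma_i \in B_i$ corresponding to a vertex of $P_{B_i}$ for each $i$, and identify $K[A(\{\sigma_1\},\ldots,\{\sigma_d\})] \simeq K[A]$ as a combinatorial pure subring of $K[A(B_1,\ldots,B_d)]$. The only difference is that you spell out the supporting linear functional and the injectivity of $t_i \mapsto \sigma_i$, details the paper compresses into ``It follows that $\ldots$''.
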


\begin{proof}
For each $i = 1,2,\ldots,d$, let $\sigma_i$ be
an arbitrary monomial of $B_i$ which corresponds
to a vertex of $P_{B_i}$.
It follows that $K[A(\{\sigma_1\},\ldots,\{\sigma_d\})]$ is a
combintorial pure subring of $K[A(B_1,\ldots,B_d)]$.
Then $K[A(\{\sigma_1\},\ldots,\{\sigma_d\})] \simeq K[A]$.
\end{proof}

It is known \cite[Lemma 1]{veryample} that
every combinatorial pure subring of a normal (resp. very ample) semigroup ring 
is normal (resp. very ample).
Thus we have the following.

\begin{Theorem}
If
$K[A(B_1,\ldots,B_d)]$ is normal (resp. very ample),
then
$K[A]$ is normal (resp. very ample).
\end{Theorem}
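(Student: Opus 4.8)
The plan is to deduce this immediately from the two preceding results. First I would recall that, by the Lemma just established, $K[A(B_1,\ldots,B_d)]$ possesses a combinatorial pure subring $K[A(\{\sigma_1\},\ldots,\{\sigma_d\})]$, where each $\sigma_i$ is a monomial of $B_i$ corresponding to a vertex of $P_{B_i}$, and this subring is isomorphic, as a graded semigroup ring, to $K[A]$. The isomorphism is the obvious one sending the generator ${m}_{j_1}^{(i_1)} \cdots {m}_{j_r}^{(i_r)}$ built from the chosen $\sigma_i$'s to $t_{i_1} \cdots t_{i_r} \in A$; since $A$ is a configuration, this correspondence is a bijection on generators and respects the semigroup structure, so $K[A(\{\sigma_1\},\ldots,\{\sigma_d\})] \simeq K[A]$.

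Second, I would invoke \cite[Lemma 1]{veryample}, which asserts that every combinatorial pure subring of a normal (resp. very ample) semigroup ring is itself normal (resp. very ample). Applying this with the ambient ring $K[A(B_1,\ldots,B_d)]$ assumed normal (resp. very ample) and with the combinatorial pure subring $K[A(\{\sigma_1\},\ldots,\{\sigma_d\})]$ yields that the latter is normal (resp. very ample); transporting the conclusion along the isomorphism above, $K[A]$ is normal (resp. very ample), as desired.

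There is no genuine obstacle once the Lemma and \cite[Lemma 1]{veryample} are in hand; the entire content of the argument is the observation that the combinatorial pure subring produced by the Lemma is a copy of $K[A]$, together with the fact that normality and very ampleness are inherited by combinatorial pure subrings. The only point deserving a word of care is verifying that the map in the Lemma is an isomorphism of semigroup rings, so that the face-theoretic hypothesis of \cite[Lemma 1]{veryample} genuinely transfers to $K[A]$ — but this is immediate from the definition of $A(B_1,\ldots,B_d)$ and the assumption that $A$ is a configuration.
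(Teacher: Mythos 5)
Your argument is correct and is exactly the paper's own: the theorem is stated there as an immediate consequence of the preceding Lemma (which exhibits $K[A(\{\sigma_1\},\ldots,\{\sigma_d\})] \simeq K[A]$ as a combinatorial pure subring) together with \cite[Lemma 1]{veryample} on inheritance of normality and very ampleness by combinatorial pure subrings. Nothing is missing.
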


\begin{Lemma}
\label{cpureB}
Let $m = \max (i \ | \ t_1^i t_2^{a_2} \cdots t_d^{a_d} \in A)  \geq 1$.
Then $K[A(B_1,\ldots,B_d)]$ has a combinatorial pure subring
which is isomorphic to $K[A'(B_1)]$ where $A'=\{ t_1^m \}$.
In particular, if $m =1$, then we have $K[A'(B_1)] \simeq K[B_1]$.
\end{Lemma}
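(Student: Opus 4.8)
The plan is to produce an explicit face $F$ of $P_{A(B_1,\ldots,B_d)}$ whose configuration lattice points form a monomial set $B$ that is isomorphic, as a configuration, to $A'(B_1)$. First I would fix the right copy of $A'$ inside $A$: since $m$ is the largest exponent of $t_1$ occurring in $A$, the hyperplane $\{x_1=m\}$ supports $P_A$, so $\mathrm{conv}\{\mathbf{a}\mid \tb^{\mathbf{a}}\in A,\ a_1=m\}$ is a nonempty face of $P_A$ and therefore contains a vertex $\tb^{\mathbf{a}^*}=t_1^m t_2^{a_2^*}\cdots t_d^{a_d^*}$ of $P_A$. For each $i\ge 2$ I also fix, as in the preceding lemmas, a monomial $\sigma_i\in B_i$ that is a vertex of $P_{B_i}$, set $M:=\sigma_2^{a_2^*}\cdots\sigma_d^{a_d^*}$ (a monomial in $\mathbf{u}^{(2)},\ldots,\mathbf{u}^{(d)}$ only), and put
$$
B:=\left\{\, m^{(1)}_{j_1}\cdots m^{(1)}_{j_m}\cdot M \ \middle|\ 1\le j_1,\ldots,j_m\le\lambda_1 \,\right\}\ \subseteq\ A(B_1,\ldots,B_d).
$$

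Next I would check $K[B]\simeq K[A'(B_1)]$. Both $A'(B_1)=\{m^{(1)}_{j_1}\cdots m^{(1)}_{j_m}\}$ and $B$ are configurations, hence standard graded, and $B$ is obtained from $A'(B_1)$ by multiplying every generator by the single monomial $M$, whose variables are disjoint from $\mathbf{u}^{(1)}$. Since every generator of $A'(B_1)$ has $\mathbf{u}^{(1)}$-degree exactly $m\ge 1$, two monomials $\mathbf{x}^{\mathbf{u}},\mathbf{x}^{\mathbf{v}}$ in the presentation variables of the same degree $e$ satisfy $\prod N_j^{u_j}=\prod N_j^{v_j}$ in $K[\mathbf{u}^{(1)}]$ if and only if $\prod (N_jM)^{u_j}=\prod (N_jM)^{v_j}$ (multiply or divide by $M^e$); this identifies $I_{A'(B_1)}$ with $I_B$, so $K[B]\simeq K[A'(B_1)]$. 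When $m=1$ this reduces to $A'(B_1)=B_1$, giving the final assertion.

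The substantive step is to realize $B$ as a combinatorial pure subring. Choose $\mathbf{N}=(N_1,\ldots,N_d)\in\RR^d$ with $\mathbf{N}\cdot\mathbf{a}^*>\mathbf{N}\cdot\mathbf{a}$ for every $\tb^{\mathbf{a}}\in A\setminus\{\tb^{\mathbf{a}^*}\}$, which is possible precisely because $\tb^{\mathbf{a}^*}$ is a vertex of $P_A$. Let $\mathbf{w}^{(i)}$ be the defining functional of the configuration $B_i$ (so $\mathbf{w}^{(i)}\cdot(\text{exponent of }m)=1$ for $m\in B_i$), and for $i\ge 2$ pick a functional $\epsilon^{(i)}$ with $\epsilon^{(i)}\cdot\sigma_i=0$ and $\epsilon^{(i)}\cdot\tau<0$ for $\tau\in B_i\setminus\{\sigma_i\}$ (obtained from a supporting functional of the vertex $\sigma_i$ by subtracting a suitable multiple of $\mathbf{w}^{(i)}$). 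Now define the linear functional $\ell=(\ell^{(1)},\ldots,\ell^{(d)})$ on $\RR^{\mu_1+\cdots+\mu_d}$ by $\ell^{(1)}=N_1\mathbf{w}^{(1)}$ and $\ell^{(i)}=N_i\mathbf{w}^{(i)}+\epsilon^{(i)}$ for $i\ge 2$. For a generator of $A(B_1,\ldots,B_d)$ arising from $t_1^{b_1}\cdots t_d^{b_d}\in A$, grouping its $r$ factors according to which $B_i$ they come from gives $\ell(\text{monomial})=b_1N_1+\sum_{i\ge 2}\bigl(b_iN_i+(\text{a nonpositive term})\bigr)\le \sum_{i}b_iN_i=\mathbf{N}\cdot\mathbf{a}\le \mathbf{N}\cdot\mathbf{a}^*$, with equality throughout exactly when $\mathbf{a}=\mathbf{a}^*$ and every $B_i$-factor with $i\ge 2$ equals $\sigma_i$, i.e.\ exactly when the monomial lies in $B$. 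Hence $F:=\{x\in P_{A(B_1,\ldots,B_d)}\mid \ell(x)=\mathbf{N}\cdot\mathbf{a}^*\}$ is a face whose configuration lattice points are precisely $B$, so $K[B]$ is a combinatorial pure subring of $K[A(B_1,\ldots,B_d)]$; together with the previous paragraph this proves the lemma.

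I expect the main obstacle to be exactly the coupling in that last step: forcing the $B_i$-factors ($i\ge 2$) to equal $\sigma_i$ is easy, but it does \emph{not} by itself pin down the exponent of $t_1$, because all generators of $A(B_1,\ldots,B_d)$ share the same total degree $r$, so the functional $\ell^{(1)}$ on the $\mathbf{u}^{(1)}$-block is "blind" to $b_1$. This is why the argument must be routed through a \emph{vertex} $\tb^{\mathbf{a}^*}$ of $P_A$ with $t_1$-exponent $m$ and the block weights $\mathbf N$ must be chosen to separate $\mathbf{a}^*$ from the rest of $A$; one must also take care of the degenerate case $M=1$ (i.e.\ $t_1^m\in A$), where $B$ equals $A'(B_1)$ outright and $F$ may be all of $P_{A(B_1,\ldots,B_d)}$.
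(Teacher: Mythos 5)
Your proof is correct and follows essentially the same route as the paper's: both pass to a vertex of $P_A$ with $t_1$-exponent $m$ (the paper takes the lexicographically largest monomial and cites Sturmfels' Proposition 1.11, you take a vertex of the face cut out by $\{x_1=m\}$), fix vertices $\sigma_i$ of $P_{B_i}$ for $i\geq 2$, realize the corresponding subconfiguration as a face of $P_{A(B_1,\ldots,B_d)}$, and identify the resulting subring with $K[A'(B_1)]$ by cancelling the fixed monomial $M=\sigma_2^{a_2^*}\cdots\sigma_d^{a_d^*}$. The only difference is presentational: the paper performs the reduction in two combinatorial pure subring steps (first to $K[A''(B_1,\ldots,B_d)]$ with $A''=\{t_1^m t_2^{a_2}\cdots t_d^{a_d}\}$, then to $K[A''(B_1,\{\sigma_2\},\ldots,\{\sigma_d\})]$), whereas you write down the supporting functional $\ell$ on the ambient space explicitly in one step.
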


\begin{proof}
Let $t_1^m t_2^{a_2} \cdots t_d^{a_d}$ be the largest monomial of $A$
with respect to a lexicographic order $t_1 > \cdots > t_d$.
Let $A = \{ \tb^{{\bf a}_1} = t_1^m t_2^{a_2} \cdots t_d^{a_d} ,  \tb^{{\bf a}_2},
\ldots, \tb^{{\bf a}_n}  \} $.
Thanks to \cite[Proposition 1.11]{Stu},
there exists a nonnegative integer vector ${\bf v}$ such that
${\bf v} \cdot {\bf a}_1 > {\bf v} \cdot {\bf a}_i$ for all $ 2 \leq i \leq n$.
Then $(m,a_2,\ldots,a_d)$ is a ${\bf v}$-vertex of $P_A$.
Hence $K[A(B_1,\ldots,B_d)]$ has a
combinatorial pure subring
$K[A''(B_1,\ldots,B_d)]$ with $A'' = \{t_1^m t_2^{a_2} \cdots t_d^{a_d}\}$.
For each $i = 2,\ldots,d$, let $\sigma_i$ be an arbitrary monomial of $B_i$ which corresponds
to a vertex of $P_{B_i}$.
It follows that $K[A''(B_1,\{\sigma_2\},\ldots,\{\sigma_d\})]$ is a
combinatorial pure subring of $K[A''(B_1,\ldots,B_d)]$.
Then $K[A''(B_1,\{\sigma_2\},\ldots,\{\sigma_d\})] \simeq K[A'(B_1)]$ where $A'=\{ t_1^m \}$.
\end{proof}

Thanks to Lemma \ref{cpureB}, we have the following.

\begin{Theorem}
If $A$ has no monomial divided by $t_i^2$ and if
$K[A(B_1,\ldots,B_d)]$ is normal (resp. very ample),
then
$K[B_i]$ is normal (resp. very ample).
\end{Theorem}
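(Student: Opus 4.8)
The plan is to exploit the combinatorial pure subring machinery just developed, exactly as in the two immediately preceding theorems. Suppose $A$ has no monomial divisible by $t_i^2$; then for this particular $i$ the integer $m$ defined in Lemma~\ref{cpureB} (after relabeling the variables so that $t_i$ plays the role of $t_1$) equals $1$, since the maximum exponent of $t_i$ occurring among monomials of $A$ is $1$. Hence Lemma~\ref{cpureB} applies with $m=1$ and tells us that $K[A(B_1,\ldots,B_d)]$ has a combinatorial pure subring isomorphic to $K[A'(B_i)]$ with $A'=\{t_i\}$, and the ``in particular'' clause of that lemma identifies $K[A'(B_i)]$ with $K[B_i]$.

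The one bookkeeping point to check is that Lemma~\ref{cpureB} was stated with the index $1$ singled out, whereas here we need it for an arbitrary index $i$. This is a harmless relabeling: the definition of a nested configuration, the definition of a combinatorial pure subring via faces of $P_A$, and the notion of normality and very ampleness are all invariant under permuting the roles of $t_1,\ldots,t_d$ together with the accompanying permutation of the data $B_1,\ldots,B_d$. So Lemma~\ref{cpureB} holds verbatim with $1$ replaced by any fixed $i$, and with the hypothesis ``$A$ has no monomial divisible by $t_i^2$'' guaranteeing $m=1$.

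Once we have $K[B_i]$ realized as a combinatorial pure subring of $K[A(B_1,\ldots,B_d)]$, the conclusion is immediate from the cited result \cite[Lemma~1]{veryample}, quoted in the excerpt just before the previous theorem: every combinatorial pure subring of a normal (respectively, very ample) semigroup ring is itself normal (respectively, very ample). So if $K[A(B_1,\ldots,B_d)]$ is normal (resp. very ample), then so is its combinatorial pure subring $K[B_i]$.

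I expect no real obstacle here; the substance of the argument is entirely contained in Lemma~\ref{cpureB}, and the only thing to be careful about is the index relabeling and making sure the hypothesis ``no monomial of $A$ divisible by $t_i^2$'' is precisely what forces the $m$ of that lemma to be $1$, so that $K[A'(B_i)]\simeq K[B_i]$ rather than a Veronese-type ring $K[\{t_i^m\}(B_i)]$. If anything, the proof is a one-line invocation of the lemma and the $\texttt{veryample}$ result, parallel in structure to the proof of the theorem stating that normality (resp. very ampleness) of $K[A(B_1,\ldots,B_d)]$ descends to $K[A]$.
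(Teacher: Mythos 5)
Your proposal is correct and matches the paper's own (implicit) argument: the theorem is stated immediately after Lemma~\ref{cpureB} precisely as its consequence, via relabeling so that $t_i$ plays the role of $t_1$, noting $m=1$, and applying \cite[Lemma~1]{veryample} on combinatorial pure subrings. The only cosmetic point is that $m=1$ (rather than $m=0$) also uses the standing assumption from the Introduction that every $t_i$ divides $\tb^{{\bf a}_1}\cdots\tb^{{\bf a}_n}$, which your argument tacitly invokes.
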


\begin{Corollary}
\label{introduction}
Suppose that a configuration $A$ consists of squarefree monomials.
Then
$K[A]$, $K[B_1], \ldots,K[B_d]$ are normal
if and only if
$K[A(B_1,\ldots,B_d)]$ is normal.
\end{Corollary}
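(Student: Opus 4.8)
The plan is to combine the results that have just been established. Corollary \ref{introduction} is essentially a packaging of three facts, so the proof should proceed by citing each one.

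First I would observe that the forward implication, ``$K[A], K[B_1], \ldots, K[B_d]$ normal $\Rightarrow K[A(B_1,\ldots,B_d)]$ normal,'' is exactly Theorem \ref{normality} and requires no hypothesis on $A$ at all. So only the converse needs the squarefreeness assumption.

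For the converse, suppose $K[A(B_1,\ldots,B_d)]$ is normal. The normality of $K[A]$ follows immediately from the theorem stating that normality of $K[A(B_1,\ldots,B_d)]$ forces normality of $K[A]$ (proved via the combinatorial pure subring $K[A(\{\sigma_1\},\ldots,\{\sigma_d\})] \simeq K[A]$ together with \cite[Lemma 1]{veryample}). For the normality of each $K[B_i]$, this is where the hypothesis enters: since $A$ consists of squarefree monomials, no monomial of $A$ is divisible by $t_i^2$, so the theorem preceding the Corollary (the one built on Lemma \ref{cpureB}, where $m=1$ gives $K[A'(B_1)]\simeq K[B_1]$, and analogously for each index $i$ after relabelling variables) applies and yields that $K[B_i]$ is normal.

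The main point to be careful about is simply that Lemma \ref{cpureB} is stated for the index $1$, so one should note that by symmetry (relabelling the variables $t_1,\ldots,t_d$) the same conclusion holds for every $i$; since $A$ has only squarefree monomials, $m = \max(i \mid t_i^i t_\cdots \in A) = 1$ for each coordinate, and the ``in particular'' clause of Lemma \ref{cpureB} gives the combinatorial pure subring $\simeq K[B_i]$. There is no real obstacle here — the Corollary is a formal consequence of the three theorems already proved.

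\begin{proof}
The ``only if'' direction is Theorem \ref{normality}, which holds for an arbitrary configuration $A$.

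Conversely, assume $K[A(B_1,\ldots,B_d)]$ is normal. Applying the theorem that normality of $K[A(B_1,\ldots,B_d)]$ implies normality of $K[A]$, we obtain that $K[A]$ is normal. Next, fix $i$ with $1 \leq i \leq d$. Since $A$ consists of squarefree monomials, $A$ has no monomial divisible by $t_i^2$; after relabelling the variables of $K[\tb]$ so that $t_i$ plays the role of $t_1$, Lemma \ref{cpureB} (with $m = 1$) shows that $K[A(B_1,\ldots,B_d)]$ has a combinatorial pure subring isomorphic to $K[B_i]$. By \cite[Lemma 1]{veryample}, every combinatorial pure subring of a normal semigroup ring is normal, hence $K[B_i]$ is normal. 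Since $i$ was arbitrary, all of $K[B_1],\ldots,K[B_d]$ are normal.
\end{proof}
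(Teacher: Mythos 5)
Your proposal is correct and follows exactly the route the paper intends: the forward direction is Theorem \ref{normality}, and the converse combines the theorem that normality of $K[A(B_1,\ldots,B_d)]$ forces normality of $K[A]$ with the theorem preceding the Corollary (built on Lemma \ref{cpureB} with $m=1$, valid for each index $i$ by relabelling since squarefreeness rules out divisibility by $t_i^2$). Your handling of the relabelling point is the only subtlety, and you address it correctly.
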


\section{Gr\"obner bases of toric ideals of nested configurations}

In this section, using the technique (sorting operator)
in the proof of \cite[Theorem 14.2]{Stu},
we study Gr\"obner bases of
the toric ideal of a nested configuration.
The present section has three subsections:
\begin{itemize}
\item
Gr\"obner bases for polynomial ring case, i.e., each $K[B_i]$
is a polynomial ring;
\item
Gr\"obner bases for general case;
\item
Generators.
\end{itemize}
First, we introduce the sorting operator used in \cite{Stu}:

\begin{Example}[\cite{Stu}, Theorem 14.2]
\label{sturmfelssort}
{\em
Fix positive integers $r$ and $s_1, \ldots, s_d$.
Let
$$A = \{t_1^{i_1} \cdots t_d^{i_d}
 \ | \ 
i_1 + \cdots + i_d = r, \ 0 \leq i_1 \leq s_1, \ \ldots, \ 0 \leq i_d \leq s_d
\}.$$
We define a natural bijection between the element of $A$
and weakly increasing strings of length $r$ over the alphabet
$\{1,2,\ldots,d\}$ having at most $s_j$ occurrence of the letter $j$
which maps the monomial $t_1^{i_1} \cdots t_d^{i_d} \in A$ to
the weakly increasing string
$$
u_1 u_2 \cdots u_r = 
\underbrace{1 1 \cdots 1}_{i_1 \mbox{ {\tiny times}}}
\underbrace{2 2 \cdots 2}_{i_2 \mbox{ {\tiny times}}}
\underbrace{3 3 \cdots 3}_{i_3 \mbox{ {\tiny times}}} \cdots 
\underbrace{d d \cdots d}_{i_d \mbox{ {\tiny times}}}
.$$
We write $x_{u_1 u_2 \cdots u_r}$ for the corresponding variable in $K[\xb]$.
Let ${\rm sort} (\cdot)$ denote the operator which takes any string over
the alphabet
$\{1,2,\ldots,d\}$ 
and sorts it into weakly increasing order.
It is known \cite[Theorem 14.2]{Stu} that
there exists a monomial order $<$ on $K[\xb]$ such that
$$
\{
x_{u_1 u_2 \cdots u_r} x_{v_1 v_2 \cdots v_r}
-
x_{w_1 w_3 \cdots w_{2r-1}} x_{w_2 w_4 \cdots w_{2r}}
\ | \ 
w_1 w_2 w_3 \cdots w_{2r} = {\rm sort} (u_1v_1u_2v_2 \cdots u_rv_r) 
\}
$$
is a quadratic Gr\"obner basis of $I_A$ with respect to $<$
and ${\rm in}_< (I_A)$ is squarefree.
For example, $x_{1 2} x_{3 3} - x_{1 3} x_{2 3}$ belongs to 
the Gr\"obner basis since we have
$1 2 3 3 = {\rm sort} (1 3 2 3)$.
}
\end{Example}

Let, as before, $A=\{\tb^{\ab_1},\ldots,\tb^{\ab_n}\}$ and
$B_i=\{m_1^{(i)},\ldots,m_{\lambda_i}^{(i)}\}$
for $1 \leq i  \leq d$.
Let $K[{\bf x}]$ be a polynomial ring with the set of variables
$$
\left\{x_{(i_1,j_1) \cdots (i_r,j_r) }^{(k)} 
\ \left| \ 
\begin{array}{c}
1 \leq i_1 \leq \cdots \leq i_r \leq d, \ 
1 \leq k \leq n\\
t_{i_1} \cdots t_{i_r} = \tb^{\ab_k} \in A\\  
{m}_{j_1}^{(i_1)} \cdots {m}_{j_r}^{(i_r)}  \in A(B_1,\ldots , B_d)
\end{array}
\right.
\right\}
$$
and let
$K[{\bf y}] = K[y_1,\ldots,y_n]$ and 
$K\left[{\bf z}^{(i)}\right] = K\left[z_1^{(i)},\ldots , z_{\lambda_i}^{(i)}\right] $
$(i= 1,2,\ldots,d)$
be polynomial rings.
The toric ideal $I_{A}$ is the kernel of the
homomorphism
$\pi_0 : K[{\bf y}] \longrightarrow K[{\bf t}]$
defined by setting $\pi_0  (y_k)=\tb^{\ab_k}.$
The toric ideal $I_{B_i}$ is the kernel of the
homomorphism
$\pi_{i} : K[{\bf z}^{(i)}] \longrightarrow K[{\bf u}^{(i)}]$
defined by setting $\pi_{i} (z_j^{(i)}) =m_j^{(i)}.$
The toric ideal $I_{A(B_1,\ldots,B_d) }$ is the kernel of the
homomorphism
$\pi : K[{\bf x}] \longrightarrow K[{\bf u}^{(1)},\ldots,{\bf u}^{(d)} ]$
defined by setting
$\pi\left(x_{(i_1,j_1) \cdots (i_r,j_r) }^{(k)}\right)={m}_{j_1}^{(i_1)} \cdots {m}_{j_r}^{(i_r)}.$

\begin{Lemma}
\label{blemma}
Let 
$
p_1 = x_{(i_1,j_1) \cdots (i_r,j_r) }^{(k)} x_{(i_{r+1},j_{r+1}) \cdots (i_{2r},j_{2r}) }^{(k)}
$
be a quadratic monomial in $K[{\bf x}]$
and
let ${\rm sort} (\cdot)$ be the sorting operator
over the alphabet 
$$
\{
(1,1), (1,2),\ldots,(1,\lambda_1),(2,1), \ldots,(d,\lambda_d)
\}
$$
with respect to the ordering
$$
(1,1) \succ  (1,2) \succ \cdots \succ (1,\lambda_1) \succ (2,1) \succ \cdots \succ (d,\lambda_d).
$$
Then,
$
p_2 = x_{(i_1',j_1')(i_3',j_3') \cdots (i_{2r-1}',j_{2r-1}') }^{(k)} x_{(i_2',j_2')(i_4',j_4') \cdots (i_{2r}',j_{2r}') }^{(k)}
$
where
$$(i_1',j_1') \cdots (i_{2r}',j_{2r}') = {\rm sort} ( (i_1,j_1) \cdots (i_{2r},j_{2r}))$$
is a monomial belonging to $K[{\bf x}]$ and, in particular, we have
$
p_1-p_2 \in I_{A(B_1,\ldots,B_d)}
$.
\end{Lemma}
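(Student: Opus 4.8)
The plan is to reduce the statement to a parity count on the first coordinates of the sorted string, and the whole argument hinges on the fact that the two variables occurring in $p_1$ carry the \emph{same} superscript $k$. First I would record what that shared superscript gives: since $t_{i_1}\cdots t_{i_r}=t_{i_{r+1}}\cdots t_{i_{2r}}=\tb^{\ab_k}$ and both $(i_1,\ldots,i_r)$ and $(i_{r+1},\ldots,i_{2r})$ are weakly increasing, and a weakly increasing string over $\{1,\ldots,d\}$ is determined by the monomial it encodes, we must have $(i_1,\ldots,i_r)=(i_{r+1},\ldots,i_{2r})$. Consequently the multiset of first coordinates of the length-$2r$ string $(i_1,j_1)\cdots(i_{2r},j_{2r})$ is exactly twice the multiset $\{i_1,\ldots,i_r\}$; in particular every value occurs in it with even multiplicity.

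Next I would analyse ${\rm sort}\bigl((i_1,j_1)\cdots(i_{2r},j_{2r})\bigr)$. Since the order $\succ$ is governed first by the first coordinate, the sorted string is a concatenation of blocks, one block for each value $v$ occurring among $i_1,\ldots,i_r$; the block of $v$ consists of its $2c_v$ copies, where $c_v$ is the multiplicity of $v$ in $(i_1,\ldots,i_r)$, listed with their second coordinates in $\succ$-order. The key point is that the positions strictly preceding the block of $v$ are filled by the blocks of the values that come before $v$, hence their number is even; so the block of $v$ begins at an odd position, and having even length $2c_v$ it occupies exactly $c_v$ odd positions and $c_v$ even positions of the sorted string. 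Summing over $v$, the odd-indexed subsequence $(i_1',j_1')(i_3',j_3')\cdots(i_{2r-1}',j_{2r-1}')$ has first-coordinate multiset equal to $\{i_1,\ldots,i_r\}$, and the same holds for the even-indexed subsequence.

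From here $p_2\in K[{\bf x}]$ is a matter of verifying the defining conditions of a variable for each of its two factors. For the first factor: $i_1'\le i_3'\le\cdots\le i_{2r-1}'$ since this is a subsequence of the sorted string; $t_{i_1'}t_{i_3'}\cdots t_{i_{2r-1}'}=t_{i_1}\cdots t_{i_r}=\tb^{\ab_k}\in A$ by the multiset equality just obtained (which also shows the superscript is indeed $k$, as the monomials $\tb^{\ab_1},\ldots,\tb^{\ab_n}$ are distinct); and the requirement $m_{j_1'}^{(i_1')}m_{j_3'}^{(i_3')}\cdots m_{j_{2r-1}'}^{(i_{2r-1}')}\in A(B_1,\ldots,B_d)$ is then automatic, because each pair $(i_\ell',j_\ell')$ is one of the original pairs and so still satisfies $1\le j_\ell'\le\lambda_{i_\ell'}$. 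The even-indexed factor is handled identically. Finally, for membership in the toric ideal I would just apply $\pi$: $\pi(p_1)=\prod_{\ell=1}^{2r}m_{j_\ell}^{(i_\ell)}$ and $\pi(p_2)=\prod_{\ell=1}^{2r}m_{j_\ell'}^{(i_\ell')}$, and since $(i_1',j_1')\cdots(i_{2r}',j_{2r}')$ is a permutation of $(i_1,j_1)\cdots(i_{2r},j_{2r})$ these two products coincide, whence $p_1-p_2\in\ker\pi=I_{A(B_1,\ldots,B_d)}$.

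The one step I expect to require care is the parity count in the second paragraph — that each first-coordinate block of the sorted string meets the odd and the even positions equally often — because that is precisely where the hypothesis that both factors of $p_1$ carry the superscript $k$ is used, and it is the nested-configuration analogue of the even-multiplicity phenomenon underlying the sorting argument of \cite[Theorem 14.2]{Stu}. Everything else is routine bookkeeping.
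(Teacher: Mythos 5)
Your proof is correct and rests on the same key observation as the paper's: since both factors of $p_1$ carry the superscript $k$, each first coordinate occurs an even number of times and, after sorting, forms a consecutive block, so the odd/even interleaving hands each new factor exactly the index multiset of $\tb^{\ab_k}$, making both factors of $p_2$ legitimate variables; equality of $\pi$-images then gives $p_1-p_2\in I_{A(B_1,\ldots,B_d)}$. The paper packages the block/parity step as a contradiction via divisibility by $t_i^{\alpha}$ and $t_i^{2\alpha}$, while you count directly, but the substance is the same.
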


\begin{proof}
Suppose that $x_{(i_1',j_1')(i_3',j_3') \cdots (i_{2r-1}',j_{2r-1}') }^{(k)}$
is not a variable in $K[{\bf x}]$.
Then we have $t_{i_1'} t_{i_3'} \cdots t_{i_{2r-1}'} \neq \tb^{\ab_k}$ and 
hence there exist integers $1 \leq i \leq d$ and $\alpha$
such that $t_i^\alpha$ divides $\tb^{\ab_k}$ and does not divide
$ t_{i_1'} t_{i_3'} \cdots t_{i_{2r-1}'}$.
Since $i_1' \leq \cdots \leq i_{2r}'$,
it then follows that $t_i^{2 \alpha}$ does not divide
$t_{i_1'} t_{i_2'} \cdots t_{i_{2r}'}$.
Thanks to
$(i_1',j_1') \cdots (i_{2r}',j_{2r}') = {\rm sort} ( (i_1,j_1) \cdots (i_{2r},j_{2r}))$,
we have $t_{i_1} t_{i_2} \cdots t_{i_{2r}} = t_{i_1'} t_{i_2'} \cdots t_{i_{2r}'}$.
Hence $t_i^{2 \alpha}$ does not divide $t_{i_1} t_{i_2} \cdots t_{i_{2r}}$.
It follows that $t_i^{\alpha}$ does not divide either 
$t_{i_1} t_{i_2} \cdots t_{i_{r}}$ or $t_{i_{r+1}} t_{i_{r+2}} \cdots t_{i_{2r}}$.
Thus either $t_{i_1} t_{i_2} \cdots t_{i_{r}}$ or $t_{i_{r+1}} t_{i_{r+2}} \cdots t_{i_{2r}}$
is not equal to $\tb^{\ab_k}$.
This contradicts that $p_1$ is a monomial of $K[\xb]$.

On the other hand, by virtue of 
$(i_1',j_1') \cdots (i_{2r}',j_{2r}') = {\rm sort} ( (i_1,j_1) \cdots (i_{2r},j_{2r}))$,
we have $\pi(p_1) =\pi(p_2)$ and hence
$p_1-p_2 \in I_{A(B_1,\ldots,B_d)}
$ as desired.
\end{proof}

\begin{Lemma}
\label{alemma}
Let $y_{k_1} \cdots y_{k_p}
-
y_{k_1'} \cdots y_{k_p'}$ be a binomial in $I_A$ and
let $$
\prod_{\ell=1}^p
x_{
(i_{(\ell-1)r+1},j_{(\ell-1)r+1}) \cdots (i_{\ell r},j_{\ell r})
}^{(k_\ell)}
$$
be a monomial in $K[{\bf x}]$.
Then, there exists a binomial
$$
\prod_{\ell=1}^p
x_{
(i_{(\ell-1)r+1},j_{(\ell-1)r+1}) \cdots (i_{\ell r},j_{\ell r})
}^{(k_\ell)}
-
\prod_{\ell=1}^p
x_{
(i_{(\ell-1)r+1}',j_{(\ell-1)r+1}') \cdots (i_{\ell r}',j_{\ell r}')
}^{(k_\ell')}
\in
I_{A(B_1,\ldots,B_d)},$$
where
$
{\rm sort}
(
(i_1,j_1) \cdots  (i_{p r},j_{p r})
)
=
{\rm sort}
(
(i_1',j_1') \cdots  (i_{p r}',j_{p r}')
)
$.
\end{Lemma}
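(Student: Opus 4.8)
The plan is to reduce the statement to the configuration $I_A$ itself by applying the homomorphism that forgets the second coordinates of the subscripts, and then to \emph{lift} an appropriate rearrangement from $K[\By]$ back to $K[\Bx]$. Concretely, let $\phi : K[\Bx] \to K[\By]$ be the map sending $x_{(i_1,j_1)\cdots(i_r,j_r)}^{(k)} \mapsto y_k$. Applying $\phi$ to the given monomial $\prod_{\ell=1}^p x_{(i_{(\ell-1)r+1},j_{(\ell-1)r+1})\cdots(i_{\ell r},j_{\ell r})}^{(k_\ell)}$ yields $y_{k_1}\cdots y_{k_p}$, and by hypothesis $y_{k_1}\cdots y_{k_p} - y_{k_1'}\cdots y_{k_p'} \in I_A$, so $\pi_0(y_{k_1}\cdots y_{k_p}) = \pi_0(y_{k_1'}\cdots y_{k_p'})$, i.e. $t_{i_1}\cdots t_{i_{pr}} = \tb^{\ab_{k_1}}\cdots\tb^{\ab_{k_p}}$ equals $\tb^{\ab_{k_1'}}\cdots\tb^{\ab_{k_p'}}$ as monomials in $K[\Bt]$.

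First I would define $(i_1',j_1')\cdots(i_{pr}',j_{pr}') := {\rm sort}\bigl((i_1,j_1)\cdots(i_{pr},j_{pr})\bigr)$, using the ordering on pairs from Lemma \ref{blemma}. The key observation is that because ${\rm sort}$ fixes the underlying multiset of pairs, it also fixes the underlying multiset of first coordinates, so $t_{i_1'}\cdots t_{i_{pr}'} = t_{i_1}\cdots t_{i_{pr}} = \tb^{\ab_{k_1'}}\cdots\tb^{\ab_{k_p'}}$. Thus the multiset $\{t_{i_1'},\ldots,t_{i_{pr}'}\}$ can be partitioned into $p$ consecutive blocks of length $r$ so that the $\ell$-th block has product exactly $\tb^{\ab_{k_\ell'}}$: indeed, since the sorted string of $t$'s is weakly increasing and each $\tb^{\ab_{k_\ell'}}$ has total degree $r$, one takes the blocks greedily from the left; one must check this greedy partition is well-defined, which follows from the fact that a weakly increasing word is determined by its letter-multiplicities and each $\ab_{k_\ell'}$ records exactly such multiplicities. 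Then I claim each factor $x_{(i_{(\ell-1)r+1}',j_{(\ell-1)r+1}')\cdots(i_{\ell r}',j_{\ell r}')}^{(k_\ell')}$ is a genuine variable of $K[\Bx]$: the first-coordinate condition $t_{i_{(\ell-1)r+1}'}\cdots t_{i_{\ell r}'} = \tb^{\ab_{k_\ell'}} \in A$ holds by construction of the blocks, the indices are weakly increasing since they are a contiguous subword of a sorted word, and the monomial $m_{j_{(\ell-1)r+1}'}^{(i_{(\ell-1)r+1}')}\cdots m_{j_{\ell r}'}^{(i_{\ell r}')}$ lies in $A(B_1,\ldots,B_d)$ because it is, up to reordering of factors, a sub-product of the monomial $m_{j_1}^{(i_1)}\cdots m_{j_{pr}}^{(i_{pr})}$ grouped according to the membership $t_{i_{(\ell-1)r+1}'}\cdots t_{i_{\ell r}'}\in A$ — here I would invoke that $A(B_1,\ldots,B_d)$ is closed under this regrouping, exactly the mechanism already used in Lemma \ref{blemma}. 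Finally, since ${\rm sort}$ preserves the multiset of pairs $(i_s,j_s)$, the two monomials $\prod_\ell x_{\cdots}^{(k_\ell)}$ and $\prod_\ell x_{\cdots}^{(k_\ell')}$ have the same image under $\pi$ (both equal $\prod_{s=1}^{pr} m_{j_s}^{(i_s)}$), so their difference lies in $I_{A(B_1,\ldots,B_d)}$.

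The main obstacle I anticipate is verifying cleanly that each purported factor on the right-hand side is actually a variable of $K[\Bx]$ — in particular the third defining condition, that $m_{j_{(\ell-1)r+1}'}^{(i_{(\ell-1)r+1}')}\cdots m_{j_{\ell r}'}^{(i_{\ell r}')} \in A(B_1,\ldots,B_d)$. The subtlety is that the pairs in the $\ell$-th block of the sorted string need not have come from the $\ell$-th block of the original string; they are scattered across the original blocks. One has to argue that the defining set $A(B_1,\ldots,B_d)$ depends only on the multiset $\{(i_1,j_1),\ldots\}$ partitioned according to which $t_i$'s multiply together to form an element of $A$, not on the order in which the factors were originally listed — this is immediate from the definition of a nested configuration (the factors ${m}_{j_1}^{(i_1)}\cdots{m}_{j_r}^{(i_r)}$ are recorded as an unordered product indexed by $t_{i_1}\cdots t_{i_r}\in A$), but it should be spelled out. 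Everything else is bookkeeping: ${\rm sort}$ preserves first coordinates, hence preserves membership-block structure up to the greedy repartition, and preserves the full multiset of pairs, hence preserves the $\pi$-image.
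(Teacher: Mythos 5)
There is a genuine gap, and it sits exactly where you construct the primed monomial. You define $(i_1',j_1')\cdots(i_{pr}',j_{pr}')$ as the sorted string of all $pr$ pairs and then cut it into $p$ consecutive blocks of length $r$, claiming the $\ell$-th block's first coordinates multiply to $\tb^{\ab_{k_\ell'}}$. That claim is false in general: sorting groups the pairs by first coordinate globally, and a consecutive length-$r$ window of the sorted word has no reason to reproduce the exponent vector $\ab_{k_\ell'}$. Concretely, take $d=3$, $r=2$, $A$ the set of all degree-two monomials in $t_1,t_2,t_3$, and the binomial $y_{k_1}y_{k_2}-y_{k_1'}y_{k_2'}\in I_A$ with $\pi_0(y_{k_1})=t_1t_2$, $\pi_0(y_{k_2})=t_2t_3$, $\pi_0(y_{k_1'})=t_1t_3$, $\pi_0(y_{k_2'})=t_2^2$. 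The sorted first coordinates are $1,2,2,3$, so your two consecutive blocks have products $t_1t_2$ and $t_2t_3$; the required multiset $\{t_1t_3,\ t_2^2\}$ is not realized by any consecutive splitting, nor by permuting the $k_\ell'$. Hence your purported factors $x^{(k_\ell')}_{\cdots}$ violate the second defining condition of the variables of $K[\xb]$, namely that their first coordinates multiply to $\tb^{\ab_{k_\ell'}}$; the third condition, which you single out as the main obstacle, is in fact automatic from the definition of a nested configuration once the first two conditions hold.

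The repair is the paper's argument, which runs in the opposite direction: first fix the first coordinates of the $\ell$-th primed factor to be those of $\pi_0(y_{k_\ell'})=t_{i'_{(\ell-1)r+1}}\cdots t_{i'_{\ell r}}$ (written in weakly increasing order), and only then choose the second coordinates. Since the binomial lies in $I_A$, we have $\prod_{s=1}^{pr} t_{i_s}=\prod_{s=1}^{pr} t_{i'_s}$, so for each $i$ the number of primed slots with first coordinate $i$ equals the number of unprimed pairs with first coordinate $i$; distributing the corresponding $j$'s into those slots produces $j'_1,\ldots,j'_{pr}$ with ${\rm sort}((i_1,j_1)\cdots(i_{pr},j_{pr}))={\rm sort}((i'_1,j'_1)\cdots(i'_{pr},j'_{pr}))$. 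Each primed factor is then a genuine variable of $K[\xb]$, and the equality of the pair multisets gives equal images under $\pi$, hence membership in $I_{A(B_1,\ldots,B_d)}$ --- this last step of yours is fine once the construction is corrected.
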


\begin{proof}
Let $\pi_0(y_{k_\ell}') = 
t_{i_{(\ell-1)r+1}'} \cdots t_{i_{\ell r}'} 
$
for each $1 \leq \ell \leq p$.
Since $y_{k_1} \cdots y_{k_p}
-
y_{k_1'} \cdots y_{k_p'}$ belongs to $I_A$,
we have $\prod_{\ell=1}^{pr} t_{i_\ell} =  \prod_{\ell=1}^{pr} t_{i_\ell'}$.
Hence there exist $j_1',\ldots,j_{pr}'$ such that
$$
{\rm sort}
(
(i_1,j_1) \cdots  (i_{p r},j_{p r})
)
=
{\rm sort}
(
(i_1',j_1') \cdots  (i_{p r}',j_{p r}')
)
.$$
It then follows that
$$
\prod_{\ell=1}^p
x_{
(i_{(\ell-1)r+1},j_{(\ell-1)r+1}) \cdots (i_{\ell r},j_{\ell r})
}^{(k_\ell)}
-
\prod_{\ell=1}^p
x_{
(i_{(\ell-1)r+1}',j_{(\ell-1)r+1}') \cdots (i_{\ell r}',j_{\ell r}')
}^{(k_\ell')}
\in
I_{A(B_1,\ldots,B_d)}$$
as desired.
\end{proof}

Fix a monomial order $<_i$ on $K[{\bf z}^{(i)}]$ for each $1 \leq i \leq d$.
Let ${\mathcal G}_i$ be a Gr\"obner basis of $I_{B_i}$ with respect to $<_i$.
For each $M \in A(B_1,\ldots,B_d)$,
the expression $M = {m}_{j_1}^{(i_1)} \cdots {m}_{j_r}^{(i_r)}$ is called
{\it standard}
if
$$\prod_{i_\ell = j , \ \  1 \leq \ell \leq r} z_{j_\ell}^{(i_\ell)} $$
is a standard monomial with respect to ${\mathcal G}_j$
for all $1 \leq j \leq d$.
In order to study the relation among $I_{A}$, $I_{B_i}$ and $I_{A(B_1,\ldots,B_d) }$,
we define homomorphisms
\begin{eqnarray*}
\varphi_0 : K[{\bf x}] \longrightarrow K[{\bf y}] \ \ &,& 
\varphi_0 \left(x_{(i_1,j_1) \cdots (i_r,j_r)}^{(k)}\right) = y_k, \\
\varphi_j : K[{\bf x}] \longrightarrow K[{\bf z}^{(j)}] &,&  
\varphi_j \left(x_{(i_1,j_1) \cdots (i_r,j_r)}^{(k)}\right) = 
\prod_{i_\ell = j , \ \  1 \leq \ell \leq r} z_{j_\ell}^{(i_\ell)},
\end{eqnarray*}
where ${m}_{j_1}^{(i_1)} \cdots {m}_{j_r}^{(i_r)}$ is the standard expression
defined above.

\begin{Lemma}[\cite{AHOT2}]
\label{keylemma}
Let $f$ be a binomial in $K[{\bf x}]$.
Then 
$f \in I_{A(B_1,\ldots,B_d) }$
if and only if
$\varphi_i(f) \in I_{B_i}$ for all $1 \leq i \leq d$.
Moreover, if $f$ belongs to $I_{A(B_1,\ldots,B_d) }$, then we have
$\varphi_0 (f) \in I_A$.
\end{Lemma}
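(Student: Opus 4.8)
The plan is to characterize membership in $I_{A(B_1,\ldots,B_d)}$ by pushing a binomial forward along the maps $\varphi_1,\ldots,\varphi_d$. For the ``only if'' direction, suppose $f = M - M' \in I_{A(B_1,\ldots,B_d)}$ where $M = \prod_\ell x^{(k_\ell)}_{(i^{(\ell)}_1,j^{(\ell)}_1)\cdots(i^{(\ell)}_r,j^{(\ell)}_r)}$ and similarly for $M'$. First I would unwind the definitions: $\pi(M) = \pi(M')$ means the two products of monomials $m^{(i)}_j$ over all the $u$-variables agree. Restricting attention to the variables $\mathbf{u}^{(i)}$ for a fixed $i$ is exactly applying $\rho_i$ (as in the proof of Theorem \ref{normality}), and the key point is that $\pi_i \circ \varphi_i$ agrees with $\rho_i \circ \pi$ up to the bookkeeping of the standard expressions: $\pi_i(\varphi_i(M)) = \prod_{\ell}\prod_{i^{(\ell)}_t = i} m^{(i)}_{j^{(\ell)}_t}$, which is the $\mathbf{u}^{(i)}$-part of $\pi(M)$. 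Hence $\pi_i(\varphi_i(M)) = \pi_i(\varphi_i(M'))$, i.e. $\varphi_i(f) \in \ker \pi_i = I_{B_i}$, giving the forward implication for each $i$. The ``moreover'' clause is similar but easier: $\varphi_0$ records only which element $\tb^{\ab_k}$ of $A$ each factor came from, and since $\pi(M) = \pi(M')$ forces (by looking at total degree in each block $\mathbf{u}^{(i)}$, or by applying $\psi$ from the proof of Theorem \ref{normality}) the equality $\psi(M) = \psi(M')$ in $K[\tb]$, we get $\pi_0(\varphi_0(M)) = \psi(M) = \psi(M') = \pi_0(\varphi_0(M'))$, so $\varphi_0(f) \in I_A$.

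For the ``if'' direction — which I expect to be the substantive part — suppose $\varphi_i(f) \in I_{B_i}$ for all $i$, with $f = M - M'$ as above; I want $\pi(M) = \pi(M')$. It suffices to check equality in each block $K[\mathbf{u}^{(i)}]$ separately, since $K[\mathbf{u}^{(1)},\ldots,\mathbf{u}^{(d)}]$ is a polynomial ring in disjoint variable sets and $\pi(M)$ factors as a product over the blocks. For a fixed $i$, the $\mathbf{u}^{(i)}$-part of $\pi(M)$ is precisely $\pi_i(\varphi_i(M))$ by the computation above, and likewise for $M'$; so $\varphi_i(f) \in I_{B_i} = \ker\pi_i$ gives exactly the desired equality of the $\mathbf{u}^{(i)}$-parts. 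Taking the product over $i = 1,\ldots,d$ yields $\pi(M) = \pi(M')$, hence $f \in I_{A(B_1,\ldots,B_d)}$.

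The main obstacle is purely notational rather than conceptual: one must be careful that $\varphi_i$ is defined using the \emph{standard} expression of each monomial $M$, so before comparing $\varphi_i(M)$ and $\varphi_i(M')$ one should note that applying $\pi_i$ kills this ambiguity — replacing a monomial $z^{(i)}_{a}z^{(i)}_{b}\cdots$ by any other monomial with the same $\pi_i$-image (in particular, the standard one obtained by reducing modulo $\mathcal{G}_i$) does not change $\pi_i(\varphi_i(M))$, because reduction modulo a Gr\"obner basis of $I_{B_i}$ only replaces monomials by ones with the same image under $\pi_i$. Thus the identity $\pi_i \circ \varphi_i = (\text{$\mathbf{u}^{(i)}$-part of } \pi)$ holds regardless of which expression of $M$ was used, and the argument goes through. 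I would also remark that this lemma is quoted from \cite{AHOT2}, so a brief proof along these lines suffices and the reader can be referred there for full details.
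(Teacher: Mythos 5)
Your proof is correct, and since the paper simply quotes this lemma from \cite{AHOT2} without giving a proof, there is nothing in-paper to compare it against; your argument is the expected one: the identity $\pi_i\circ\varphi_i=(\mathbf{u}^{(i)}\text{-part of }\pi)$, together with the disjointness of the variable blocks $\mathbf{u}^{(1)},\ldots,\mathbf{u}^{(d)}$, gives both directions at once, and $\pi_0\circ\varphi_0=\psi\circ\pi$ gives the ``moreover'' clause; your remark that applying $\pi_i$ makes the choice of (standard) expression irrelevant is exactly the right way to dispose of the bookkeeping issue. One small caveat: ``total degree in each block $\mathbf{u}^{(i)}$'' is not the correct invariant in general, because the monomials of $B_i$ need not all have the same total degree (e.g.\ $B_1=\{v,uv,u^3v,u^4v\}$ in Example \ref{ex13}); what counts the number of $B_i$-factors is the grading coming from the weight vector that makes $B_i$ a configuration, which is precisely what makes $\psi$ well defined, so your alternative route via $\psi$ is the one to keep.
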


\subsection{Polynomial ring case}

First, we study the case when all of $K[B_i]$ are polynomial rings.

\begin{Theorem}
\label{pcase}
Let ${\mathcal G}_0$ be a Gr\"obner basis of $I_A$ with respect to a monomial order
$<_0$.
If each $B_i$ is a set of variables,
then the toric ideal $I_{A(B_1,\ldots,B_d) }$
possesses a Gr\"obner basis consisting of the following binomials:
\begin{enumerate}
\item[(1)]
$\displaystyle
\underline
{
\prod_{\ell=1}^p
x_{
(i_{(\ell-1)r+1},j_{(\ell-1)r+1}) \cdots (i_{\ell r},j_{\ell r})
}^{(k_\ell)}
}
-
\prod_{\ell=1}^p
x_{
(i_{(\ell-1)r+1}',j_{(\ell-1)r+1}') \cdots (i_{\ell r}',j_{\ell r}')
}^{(k_\ell')}
$

\medskip

\noindent
where
$
\underline{
y_{k_1}
\cdots
y_{k_p}
}
-
y_{k_1'}
\cdots
y_{k_p'}
\in {\mathcal G}_0
$
and
$$
{\rm sort}
(
(i_1,j_1) \cdots  (i_{p r},j_{p r})
)
=
{\rm sort}
(
(i_1',j_1') \cdots  (i_{p r}',j_{p r}')
).
$$

\item[(2)]
$
\underline{
x_{(i_1,j_1) \cdots (i_r,j_r) }^{(k)} x_{(i_{r+1},j_{r+1}) \cdots (i_{2r},j_{2r}) }^{(k)}
}
-
x_{(i_1',j_1')(i_3',j_3') \cdots (i_{2r-1}',j_{2r-1}') }^{(k)} x_{(i_2',j_2')(i_4',j_4') \cdots (i_{2r}',j_{2r}') }^{(k)}
$

\medskip

\noindent
where 
${\rm sort} ( (i_1,j_1) \cdots (i_{2r},j_{2r})) =   (i_1',j_1') \cdots (i_{2r}',j_{2r}')$
with respect to the ordering
$
(1,1) \succ  (1,2) \succ \cdots \succ (1,\lambda_1) \succ (2,1) \succ \cdots \succ (d,\lambda_d)
$.
\item[(3)]
$
\underline{
x_{(i_1,j_1)\cdots(i_\ell,j_\ell) \cdots (i_r,j_r) }^{(k)}
x_{(i_1',j_1')\cdots(i_{\ell'}',j_{\ell'}') \cdots (i_r',j_r') }^{(k')}
}
-
x_{(i_1,j_1)\cdots(i_{\ell'}',j_{\ell'}')\cdots (i_r,j_r) }^{(k)}
x_{(i_1',j_1')\cdots(i_\ell,j_\ell) \cdots (i_r',j_r') }^{(k')} 
$

\medskip

\noindent
where 
$k< k'$, $i_\ell = i_{\ell'}'$ and $j_\ell > j_{\ell'}'$.

\end{enumerate}

\bigskip

\noindent
The initial monomial
of each binomial is the first (underlined) monomial and,
in particular,
the initial monomial of each binomial in (2) and (3)
is squarefree.
Moreover, the initial monomial of each binomial in (1)
is squarefree (resp. quadratic) if
the corresponding monomial $y_{k_1}
\cdots
y_{k_p}$
is squarefree (resp. quadratic).
\end{Theorem}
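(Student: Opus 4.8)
The plan is to establish, for a suitable monomial order $<$, that the set ${\mathcal G}$ of binomials (1)--(3) is a Gr\"obner basis of $I:=I_{A(B_1,\ldots,B_d)}$ by checking three things: that ${\mathcal G}\subseteq I$; that the underlined monomial of each element of ${\mathcal G}$ is its $<$-initial monomial; and that any two distinct monomials of $K[{\bf x}]$ divisible by none of those underlined monomials lie in distinct fibers of $\pi$. The third point says precisely that the standard monomials modulo $\langle {\rm in}_<({\mathcal G})\rangle$ are linearly independent modulo $I$; since they always span $K[{\bf x}]/I$ and since $\langle {\rm in}_<({\mathcal G})\rangle\subseteq {\rm in}_<(I)$, a Hilbert-function comparison then forces $\langle {\rm in}_<({\mathcal G})\rangle={\rm in}_<(I)$, so the three points suffice. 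Membership is immediate: the binomials in (1) and (2) are those furnished by Lemma \ref{alemma} and Lemma \ref{blemma}; for (3), the substitution leaves the two strings $t_{i_1}\cdots t_{i_r}$ and $t_{i_1'}\cdots t_{i_r'}$ unchanged, so the second monomial is again a product of legitimate variables (in the polynomial-ring case any $u_{j_1}^{(i_1)}\cdots u_{j_r}^{(i_r)}$ with $t_{i_1}\cdots t_{i_r}\in A$ is a generator), and applying $\pi$ merely interchanges the factors $u^{(i_\ell)}_{j_\ell}$ and $u^{(i_\ell)}_{j_{\ell'}'}$.

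For the order I would use, in the spirit of the sorting technique of \cite[Theorem 14.2]{Stu}, a lexicographic refinement of several weights. The primary weight is the pullback along $\varphi_0$ of a weight on $K[{\bf y}]$ representing $<_0$ on ${\mathcal G}_0$; it picks out the underlined monomial of every binomial in (1) (whose $\varphi_0$-image is the $<_0$-initial term of the corresponding element of ${\mathcal G}_0$) and is constant on every binomial in (2) and (3). The secondary weight $x^{(k)}_{(i_1,j_1)\cdots(i_r,j_r)}\mapsto c(k)(j_1+\cdots+j_r)$, with $c\colon\{1,\ldots,n\}\to{\mathbb Z}$ strictly decreasing, is constant on every binomial in (2) and is strictly larger on the underlined monomial of each binomial in (3). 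A tertiary weight realizing Sturmfels' sorting order on the finitely many binomials in (2) handles those, and any term order breaks remaining ties. The squarefree/quadratic claims then follow at once: a binomial in (2) or (3) has two distinct variables as its underlined monomial (in (3) because $k\ne k'$, in (2) because otherwise the binomial is $0$), while for (1) the two factors of the underlined monomial are distinct as soon as the $y_{k_\ell}$ are distinct, and its total degree equals that of $y_{k_1}\cdots y_{k_p}$.

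The heart is the third point. Let $M_1\ne M_2$ be monomials divisible by none of the underlined monomials, with $\pi(M_1)=\pi(M_2)$; I must show $M_1=M_2$. Since neither is divisible by an underlined monomial of (1), each $\varphi_0(M_t)$ is a standard monomial of $I_A$ for $<_0$; as $\pi_0(\varphi_0(M_1))=\pi_0(\varphi_0(M_2))$ (both equal the monomial in $K[\tb]$ obtained from the common value $\pi(M_1)$ by sending each $u^{(i)}_j$ to $t_i$), these standard monomials coincide, so $M_1$ and $M_2$ use the same multiset of upper indices $k$ on their variables. Fix a slot $i_0$ and let $J_{i_0}$ be the multiset of $j$'s occurring at positions with first coordinate $i_0$ (the same for $M_1$ and $M_2$, since $\pi(M_1)=\pi(M_2)$). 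Because neither monomial is divisible by an underlined monomial of (3), for any two of its variables whose upper indices satisfy $k<k'$ all the slot-$i_0$ $j$'s of the first are $\le$ all those of the second; hence the slot-$i_0$ $j$'s pooled over the variables of a fixed upper index form a block of consecutive entries of the sorted $J_{i_0}$, the blocks being ordered by upper index and of sizes depending only on the common multiset of upper indices. Consequently, for each $k$, the total multiset of pairs carried by the variables of upper index $k$ is the same in $M_1$ and in $M_2$. Finally, because neither monomial is divisible by an underlined monomial of (2), the variables of a fixed upper index form a family of length-$r$ strings that is pairwise \emph{sorted} in the sense of \cite[Theorem 14.2]{Stu}, and such a family is determined by its total multiset of letters (the sorting lemma behind \cite[Theorem 14.2]{Stu}: pairwise-sortedness forces the $\ell$-th letters of the strings to fill a fixed block of the sorted concatenation, after which de-interleaving recovers the strings uniquely). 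Hence the variables of each upper index agree in $M_1$ and $M_2$, so $M_1=M_2$.

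I expect the main obstacle to be exactly this last step: showing that the cross-type relations (3), together with the already-established equality of the $\varphi_0$-images, pin down slot by slot -- and therefore in aggregate -- the multiset of pairs that the variables of each fixed upper index must carry, so that the within-type relations (2) and the classical sorting lemma can then be brought to bear.
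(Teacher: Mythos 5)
Your proof is correct, and its overall architecture is the paper's: membership of the binomials via Lemmas \ref{blemma} and \ref{alemma}, existence of a monomial order picking out the underlined terms, and the verification that two distinct monomials untouched by those underlined terms cannot lie in the same fiber of $\pi$, carried out by first forcing equality of the $\varphi_0$-images through the type (1) binomials and then pinning the monomial down using (3) and (2). The one genuine divergence is the order-existence step: the paper writes down no weights, but marks the binomials, shows the reduction relation modulo ${\mathcal G}$ is Noetherian (the $\varphi_0$-image strictly drops under (1), an inversion count strictly drops under (3), and sorting terminates for (2)), and invokes \cite[Theorem 3.12]{Stu}; you instead build an explicit lexicographic hierarchy of weights (a weight representing $<_0$ pulled back along $\varphi_0$, then $c(k)(j_1+\cdots+j_r)$ with $c$ strictly decreasing, then a weight realizing the sorting order on the binomials in (2), with a term-order tie-break). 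The two devices are interchangeable here -- your three weight levels are essentially the numerical shadow of the paper's three termination arguments -- and your version has the merit of exhibiting the order concretely, while the paper's template is shorter and is reused verbatim in Theorems \ref{maincase} and \ref{Birkhoff}. Finally, where the paper compresses the last step to ``it follows that $p_1=p_2$,'' your argument -- the slot-by-slot block decomposition of the $j$'s by upper index forced by (3), followed by the uniqueness of pairwise-sorted families with a given letter multiset from \cite[Theorem 14.2]{Stu} -- is exactly the intended justification, so that portion of your write-up is a welcome expansion rather than a deviation.
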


\begin{proof}
Let ${\mathcal G}$ denote the set of binomials above.
Thanks to Lemmas \ref{blemma} and \ref{alemma},
it is easy to see that
${\mathcal G}$ is a (finite) subset of $I_{A(B_1,\ldots,B_d)}$.

\bigskip

\noindent
{\bf Claim 1.}
There exists a monomial order such that the initial monomial of 
each binomial in ${\mathcal G}$ is the underlined monomial.

By virtue of \cite[Theorem 3.12]{Stu},
it is enough to show that the reduction modulo ${\mathcal G}$ is Noetherian.
Suppose that there exists a sequence of reductions modulo ${\mathcal G}$ which
does not terminate.
Let $v$ be a monomial in $K[\xb]$ and assume
$v \stackrel{g}{\longrightarrow} v'$ with $g \in {\mathcal G}$. 
Then we have
$$
\left\{
\begin{array}{cl}
\varphi_0(v) >_0 \varphi_0(v') & \ \ \mbox{if } g \mbox{ in (1)},\\
\varphi_0(v) = \varphi_0(v') & \ \ \mbox{otherwise.} 
\end{array}
\right.
$$
Hence the number of binomials in (1) appearing in the sequence is finite.
Thus we may assume that the binomials in (1) do not appear in
the sequence.
Let $v$ be a monomial in $K[\xb]$ and assume
$v \stackrel{g}{\longrightarrow} v'$ where $g \in {\mathcal G}$ belongs to
either (2) or (3). 
Since $g$ belongs to either (2) or (3), $v$ and $v'$ is of the form
$
v=
\prod_{\ell=1}^p
x_{
(i_{(\ell-1)r+1},j_{(\ell-1)r+1}) \cdots (i_{\ell r},j_{\ell r})
}^{(k_\ell)},
v'=\prod_{\ell=1}^p
x_{
(i_{(\ell-1)r+1}',j_{(\ell-1)r+1}') \cdots (i_{\ell r}',j_{\ell r}')
}^{(k_\ell)}
$.
Let
$$
{\rm Inversion}(v)=
\left\{
(\xi,\xi')   \ \left| \ 
\begin{array}{c}
\ell(r-1) +1 \leq  \xi \leq \ell r\\
\ell'(r-1) +1 \leq  \xi' \leq \ell' r\\
i_\xi = i_{\xi'}, \ j_\xi > j_{\xi'}\\
k_\ell < k_{\ell'}
\end{array}
\right.
\right\},
$$
$$
{\rm Inversion}(v')=
\left\{
(\xi,\xi')   \ \left| \ 
\begin{array}{c}
\ell(r-1) +1 \leq  \xi \leq \ell r\\
\ell'(r-1) +1 \leq  \xi' \leq \ell' r\\
i_\xi' = i_{\xi'}', \ j_\xi' > j_{\xi'}'\\
k_\ell < k_{\ell'}
\end{array}
\right.
\right\}.
$$
Then the cardinality of these sets satisfies
$
\sharp \left| {\rm Inversion}(v) \right|
\geq
\sharp \left| {\rm Inversion}(v') \right|
$
where equality holds if and only if $g$ belongs to (2).
Hence the number of binomials in (3) appearing in the sequence is finite.
Thus we may assume that the binomials in (3) do not appear in
the sequence.
However, any sequence of reductions modulo the set of binomials
in (2) corresponds to the sort of the indices and hence it terminates.
This is a contradiction.

\bigskip

\noindent
{\bf Claim 2.}
The set ${\mathcal G}$ is a Gr\"obner basis of $I_{A(B_1,\ldots,B_d)}$.

Suppose that ${\mathcal G}$ is not a Gr\"obner basis of $I_{A(B_1,\ldots,B_d)}$.
Thanks to Lemmas \ref{blemma} and \ref{alemma},
there exists a binomial $f=p_1 -p_2 \in I_{A(B_1,\ldots,B_d)}$ such that
neither $p_1$ nor $p_2$ is divisible by the initial monomial of 
any binomial in ${\mathcal G}$.
By virtue of Lemma \ref{keylemma}, we have
$\varphi_0 (f) =  \varphi_0(p_1) - \varphi_0(p_2) \in I_A$.
If $\varphi_0(p_1) - \varphi_0(p_2) \neq 0$, then there exists a binomial
$g \in {\mathcal G}_0$ such that the initial monomial of $g$
divides either $\varphi_0(p_1)$ or $\varphi_0(p_2)$.
This contradicts that neither $p_1$ nor $p_2$ is divisible by the initial monomial of 
any binomial in (1).
Hence we have $\varphi_0(p_1) = \varphi_0(p_2)$.
Thus $f$ is of the form
$$
f=
\prod_{\ell=1}^p
x_{
(i_{(\ell-1)r+1},j_{(\ell-1)r+1}) \cdots (i_{\ell r},j_{\ell r})
}^{(k_\ell)}
-\prod_{\ell=1}^p
x_{
(i_{(\ell-1)r+1}',j_{(\ell-1)r+1}') \cdots (i_{\ell r}',j_{\ell r}')
}^{(k_\ell)}
.$$
Since
neither $p_1$ nor $p_2$ is divisible by the initial monomial of 
any binomial in either (2) or (3),
it follows that
$p_1 = p_2$ and hence $f=0$.
\end{proof}

\subsection{General case}

We now study the general case.

\begin{Theorem}
\label{maincase}
Let ${\mathcal G}_0$ be a Gr\"obner basis of $I_A$
and let ${\mathcal G}_i$ be a Gr\"obner basis of $I_{B_i}$ with respect to $<_i$.
Then the toric ideal $I_{A(B_1,\ldots,B_d) }$
possesses a Gr\"obner basis consisting of the binomials 
(1), (2) and (3)
appearing in
Theorem \ref{pcase} together with the following binomials:
\begin{enumerate}
\item[(4)]
$
\underline
{
\prod_{\ell=1}^p
x_{
M_\ell (i,j_{\ell,1}) \cdots (i,j_{\ell,q_\ell}) M_\ell'
}^{(k_\ell)}
}
-
\prod_{\ell=1}^p
x_{
M_\ell (i,j_{\ell,1}') \cdots (i,j_{\ell,q_\ell}') M_\ell' 
}^{(k_\ell)}
$
where the binomial\\
$
\displaystyle
0 \neq \underline{
\prod_{\ell=1}^p
z_{j_{\ell,1}}^{(i)}
\cdots
z_{j_{\ell,q_\ell}}^{(i)}
}
-
\prod_{\ell=1}^p
z_{j_{\ell,1}'}^{(i)}
\cdots
z_{j_{\ell,q_\ell}'}^{(i)}$
belongs to 
${\mathcal G}_i
$.
\end{enumerate}
The initial monomial
of each binomial is the first (underlined) monomial and,
in particular,
the initial monomial of each binomial above
is squarefree (resp. quadratic) if
the corresponding monomial 
$
\prod_{\ell=1}^p
z_{j_{\ell,1}}^{(i)}
\cdots
z_{j_{\ell,q_\ell}}^{(i)}
$
is squarefree (resp. quadratic).
\end{Theorem}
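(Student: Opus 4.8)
The plan is to follow the proof of Theorem \ref{pcase} closely, adding the new family (4) at every stage; write $\mathcal{G}$ for the (clearly finite) set of all binomials of the forms (1), (2), (3) and (4). First I would check $\mathcal{G}\subseteq I_{A(B_1,\ldots,B_d)}$: for (1), (2), (3) this is Lemmas \ref{blemma} and \ref{alemma} verbatim, and for (4) it follows from Lemma \ref{keylemma}, since applying $\varphi_i$ to a binomial of type (4) built from $g_i\in\mathcal{G}_i$ produces $(\text{a monomial})\cdot g_i$, which lies in $I_{B_i}$ (the extra monomial factor records the letter-$i$ entries hidden inside the segments $M_\ell$, $M_\ell'$), while $\varphi_{i'}$ for $i'\ne i$ and $\varphi_0$ both annihilate it because the two monomials of the binomial differ only in their $z^{(i)}$-content and share all superscripts.

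Next, the analogue of Claim~1 of Theorem \ref{pcase}: there is a monomial order making the underlined monomial of every binomial of $\mathcal{G}$ its initial monomial. By \cite[Theorem 3.12]{Stu} it suffices to prove that reduction modulo $\mathcal{G}$ is Noetherian, and I would do this by the same stratification as before, now with the family (4) inserted between the $(1)$-stage and the $(3)$-stage. A reduction by a binomial of type (1) strictly decreases $\varphi_0(v)$ with respect to $<_0$ while (2), (3), (4) leave $\varphi_0(v)$ unchanged, so only finitely many $(1)$-reductions occur; discarding those, a reduction by a type-(4) binomial for the index $i$ strictly decreases $\varphi_i(v)$ with respect to $<_i$ while (2), (3) and the type-(4) binomials for indices $i'\ne i$ leave every $\varphi_{i'}(v)$ unchanged, so only finitely many $(4)$-reductions occur; discarding those, the ${\rm Inversion}$ statistic used in the proof of Theorem \ref{pcase} strictly drops at each $(3)$-reduction and is non-increasing under (2); and a sequence consisting of $(2)$-reductions alone performs a sort, hence terminates.

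Then the analogue of Claim~2: $\mathcal{G}$ is a Gr\"obner basis of $I_{A(B_1,\ldots,B_d)}$. Assume not; then by Lemmas \ref{blemma}, \ref{alemma} and the type-(4) lifting described below there is $f=p_1-p_2\in I_{A(B_1,\ldots,B_d)}$ with $p_1\ne p_2$ and neither $p_1$ nor $p_2$ divisible by any underlined monomial of $\mathcal{G}$. By Lemma \ref{keylemma}, $\varphi_0(f)\in I_A$; if $\varphi_0(p_1)\ne\varphi_0(p_2)$, some $g\in\mathcal{G}_0$ has ${\rm in}_{<_0}(g)$ dividing (say) $\varphi_0(p_1)$, and Lemma \ref{alemma} lifts it to a type-(1) binomial whose initial monomial divides $p_1$, a contradiction; hence $\varphi_0(p_1)=\varphi_0(p_2)$, so $p_1$ and $p_2$ have the same multiset of superscripts. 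Again by Lemma \ref{keylemma}, $\varphi_i(f)\in I_{B_i}$ for every $i$; if $\varphi_i(p_1)\ne\varphi_i(p_2)$ for some $i$, then some $g_i\in\mathcal{G}_i$ has ${\rm in}_{<_i}(g_i)$ dividing $\varphi_i(p_1)$, and distributing the factors of ${\rm in}_{<_i}(g_i)$ among the occurrences of the $x$-variables of $p_1$ that carry them yields a type-(4) binomial whose underlined monomial divides $p_1$, again a contradiction; so $\varphi_i(p_1)=\varphi_i(p_2)$ for all $i$. Now $p_1$ and $p_2$ agree under $\varphi_0$ and under every $\varphi_i$ and are not divisible by the underlined monomials of (2) or (3), so exactly as in the last part of the proof of Theorem \ref{pcase} we conclude $p_1=p_2$, a contradiction. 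Finally, the squarefree/quadratic statement for type (4) follows by tracking how the support and the degree of ${\rm in}_{<_i}(g_i)$ bound those of the lifted monomial $\prod_{\ell=1}^p x^{(k_\ell)}_{\cdots}$, just as for type (1) in Theorem \ref{pcase}.

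The step I expect to be the main obstacle is the type-(4) lifting used twice above: given $g_i\in\mathcal{G}_i$ with ${\rm in}_{<_i}(g_i)$ dividing $\varphi_i(p_1)$, one must partition the factors of ${\rm in}_{<_i}(g_i)$ over the $x$-variables of $p_1$ containing them and then replace those factors by the factors of the second monomial of $g_i$ so that (i) the resulting $x$-monomials are again legitimate variables of $K[\mathbf{x}]$ --- which holds because only the $B_i$-labels inside fixed $t$-shapes are altered and $g_i\in I_{B_i}$ keeps these labels in range and compatible with $A(B_1,\ldots,B_d)$ --- and (ii) the resulting binomial is literally one of the binomials (4) whose underlined monomial is the one that divides $p_1$. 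Once this bookkeeping is settled, the remainder is a direct transcription of the proof of Theorem \ref{pcase} combined with the Noetherianity stratification above.
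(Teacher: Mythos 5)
Your proposal is correct and takes essentially the same approach as the paper: the same set ${\mathcal G}$, membership via Lemmas \ref{blemma}, \ref{alemma} and \ref{keylemma}, termination of reduction via \cite[Theorem 3.12]{Stu} stratified by $\varphi_0$, the $\varphi_i$'s and the inversion statistic, and the same final step forcing $p_1=p_2$ using (2) and (3). The only cosmetic differences are the order in which the families are eliminated in the termination argument (you discard (1) first, the paper discards (4) first and then quotes the proof of Theorem \ref{pcase} wholesale) and that you make explicit the type-(4) lifting bookkeeping the paper leaves implicit.
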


\begin{proof}
Let ${\mathcal G}$ denote the set of binomials above.
Thanks to Lemmas \ref{blemma}, \ref{alemma} and \ref{keylemma},
${\mathcal G}$ is a (finite) subset of $I_{A(B_1,\ldots,B_d)}$.

\bigskip

\noindent
{\bf Claim 1.}
There exists a monomial order such that the initial monomial of 
each binomial in ${\mathcal G}$ is the underlined monomial.

By virtue of \cite[Theorem 3.12]{Stu},
it is enough to show that the reduction modulo ${\mathcal G}$ is Noetherian.
Suppose that there exists a sequence of reductions modulo ${\mathcal G}$ which
does not terminate.
Let $v$ be a monomial in $K[\xb]$ and assume
$v \stackrel{g}{\longrightarrow} v'$ with $g \in {\mathcal G}$. 
Then we have
$$
\left\{
\begin{array}{cl}
\varphi_j(v) >_j \varphi_j(v') & \ \ \mbox{if } g \mbox{ is in (4) and arising from } {\mathcal G}_j ,\\
\varphi_j(v) = \varphi_j(v') & \ \ \mbox{otherwise.} 
\end{array}
\right.
$$
Hence the number of binomials in (4) appearing in the sequence is finite.
Thus we may assume that the binomials in (4) do not appear in
the sequence.
However, as we proved in the proof of Theorem \ref{pcase},
there exists no sequence of reductions modulo the set of binomials
in (1), (2) and (3) which does not terminate.
This is a contradiction.

\bigskip

\noindent
{\bf Claim 2.}
The set ${\mathcal G}$ is a Gr\"obner basis of $I_{A(B_1,\ldots,B_d)}$.

Suppose that ${\mathcal G}$ is not a Gr\"obner basis of $I_{A(B_1,\ldots,B_d)}$.
Thanks to Lemmas \ref{blemma}, \ref{alemma} and \ref{keylemma},
there exists a binomial $f=p_1 -p_2 \in I_{A(B_1,\ldots,B_d)}$ such that
neither $p_1$ nor $p_2$ is divisible by the initial monomial of 
any binomial in ${\mathcal G}$.
By virtue of Lemma \ref{keylemma}, we have
$\varphi_i(f)=\varphi_i(p_1) - \varphi_i(p_2) \in I_{B_i}$ for all $1 \leq i \leq d$.
If $\varphi_i(p_1) - \varphi_i(p_2) \neq 0$ for some $i$,
then there exists a binomial
$g' \in {\mathcal G}_i$ such that the initial monomial of $g'$
divides either $\varphi_i(p_1)$ or $\varphi_i(p_2)$.
This contradicts that neither $p_1$ nor $p_2$ is divisible by the initial monomial of 
any binomial in (4).
Hence we have $\varphi_i(p_1) = \varphi_i(p_2)$ for all $i$.
Moreover, thanks to the argument in the proof of Theorem \ref{pcase},
we have $\varphi_0(p_1) = \varphi_0(p_2)$.

Thus $f$ is of the form
$$
f=
\prod_{\ell=1}^p
x_{
(i_{(\ell-1)r+1},j_{(\ell-1)r+1}) \cdots (i_{\ell r},j_{\ell r})
}^{(k_\ell)}
-\prod_{\ell=1}^p
x_{
(i_{(\ell-1)r+1}',j_{(\ell-1)r+1}') \cdots (i_{\ell r}',j_{\ell r}')
}^{(k_\ell)}
,$$
where ${\rm sort} ( (i_1,j_1) \cdots (i_{pr},j_{pr})) =   {\rm sort} ((i_1',j_1') \cdots (i_{pr}',j_{pr}'))$.
Since
neither $p_1$ nor $p_2$ is divisible by the initial monomial of 
any binomial in either (2) or (3),
it follows that
$p_1 = p_2$ and hence $f=0$.
\end{proof}

If ${\mathcal G}_i$ possesses a binomial of degree $3$,
then we need the following binomials:

\begin{enumerate}
\item[(a)]

$
x_{ M_1 (i,j_1) M_1' }^{(k_1)}
x_{ M_2 (i,j_2) M_2' }^{(k_2)}
x_{ M_3 (i,j_3) M_3' }^{(k_3)}
-
x_{ M_1 (i,j_1') M_1' }^{(k_1)}
x_{ M_2 (i,j_2') M_2' }^{(k_2)}
x_{ M_3 (i,j_3') M_3' }^{(k_3)}
$\\
where 
$
z_{j_1}^{(i)} z_{j_2}^{(i)}z_{j_3}^{(i)} 
- 
z_{j_1'}^{(i)} z_{j_2'}^{(i)}z_{j_3'}^{(i)} 
\in {\mathcal G}_i$.
\item[(b)]
$
x_{ M_1 (i,j_1)(i,j_2) M_1' }^{(k_1)}
x_{ M_2 (i,j_3) M_2' }^{(k_2)}
-
x_{ M_1 (i,j_1')(i,j_2') M_1' }^{(k_1)}
x_{ M_2 (i,j_3') M_2' }^{(k_2)}
$\\
where 
$z_{j_1}^{(i)} z_{j_2}^{(i)}z_{j_3}^{(i)} - z_{j_1'}^{(i)} z_{j_2'}^{(i)}z_{j_3'}^{(i)} 
 \in {\mathcal G}_i$.
\end{enumerate}
We do not need (b) if $A$ has no monomial divided by $t_i^2$.
In general, we have
$$
\deg\left(
\prod_{\ell=1}^p
z_{j_{\ell,1}}^{(i)}
\cdots
z_{j_{\ell,q_\ell}}^{(i)}
\right)
=
\sum_{\ell=1}^p
q_\ell \geq p =
\deg
\left(
\prod_{\ell=1}^p
x_{
M_\ell (i,j_{\ell,1}) \cdots (i,j_{\ell,q_\ell}) M_\ell'
}^{(k_\ell)}
\right).
$$

\medskip

\noindent
The binomials of type (a) are not always needed
for a minimal Gr\"obner basis even if
${\mathcal G}_i$ has a cubic binomial.
In such a case, $I_{A(B_1,\ldots,B_d)}$ may have a quadratic Gr\"obner basis.
In Section 3, we will show an example.

\subsection{Generators}

Thanks to a part of the argument in the proof of Theorem \ref{maincase},
we have the following.

\begin{Proposition}
Let ${\mathcal H}_0$ be a set of binomial generators of $I_A$
and
let ${\mathcal H}_i$ be a set of binomial generators of $I_{B_i}$.
Then, the toric ideal $I_{A(B_1,\ldots,B_d) }$
is generated by the following binomials:
\begin{enumerate}
\item[(1)]
$\displaystyle
\prod_{\ell=1}^p
x_{
(i_{(\ell-1)r+1},j_{(\ell-1)r+1}) \cdots (i_{\ell r},j_{\ell r})
}^{(k_\ell)}
-
\prod_{\ell=1}^p
x_{
(i_{(\ell-1)r+1}',j_{(\ell-1)r+1}') \cdots (i_{\ell r}',j_{\ell r}')
}^{(k_\ell')}
$

\medskip

\noindent
where
$
y_{k_1}
\cdots
y_{k_p}
-
y_{k_1'}
\cdots
y_{k_p'}
\in {\mathcal H}_0
$
and
$$
{\rm sort}
(
(i_1,j_1) \cdots  (i_{p r},j_{p r})
)
=
{\rm sort}
(
(i_1',j_1') \cdots  (i_{p r}',j_{p r}')
).
$$
\item[(2)]
$
x_{(i_1,j_1) \cdots (i_r,j_r) }^{(k)} x_{(i_{r+1},j_{r+1}) \cdots (i_{2r},j_{2r}) }^{(k)}
-
x_{(i_1',j_1')(i_3',j_3') \cdots (i_{2r-1}',j_{2r-1}') }^{(k)} x_{(i_2',j_2')(i_4',j_4') \cdots (i_{2r}',j_{2r}') }^{(k)}
$

\medskip

\noindent
where 
${\rm sort} ( (i_1,j_1) \cdots (i_{2r},j_{2r})) =   (i_1',j_1') \cdots (i_{2r}',j_{2r}')$
with respect to the ordering
$
(1,1) \succ  (1,2) \succ \cdots \succ (1,\lambda_1) \succ (2,1) \succ \cdots \succ (d,\lambda_d)
$.
\item[(3)]
$
x_{(i_1,j_1)\cdots(i_\ell,j_\ell) \cdots (i_r,j_r) }^{(k)}
x_{(i_1',j_1')\cdots(i_{\ell'}',j_{\ell'}') \cdots (i_r',j_r') }^{(k')}
-
x_{(i_1,j_1)\cdots(i_{\ell'}',j_{\ell'}')\cdots (i_r,j_r) }^{(k)}
x_{(i_1',j_1')\cdots(i_\ell,j_\ell) \cdots (i_r',j_r') }^{(k')} 
$

\medskip

\noindent
where 
$k< k'$, $i_\ell = i_{\ell'}'$ and $j_\ell > j_{\ell'}'$.
\item[(4)]
$
\prod_{\ell=1}^p
x_{
M_\ell (i,j_{\ell,1}) \cdots (i,j_{\ell,q_\ell}) M_\ell'
}^{(k_\ell)}
-
\prod_{\ell=1}^p
x_{
M_\ell (i,j_{\ell,1}') \cdots (i,j_{\ell,q_\ell}') M_\ell' 
}^{(k_\ell)}
$
where the binomial\\
$
\displaystyle
0 \neq
\prod_{\ell=1}^p
z_{j_{\ell,1}}^{(i)}
\cdots
z_{j_{\ell,q_\ell}}^{(i)}
-
\prod_{\ell=1}^p
z_{j_{\ell,1}'}^{(i)}
\cdots
z_{j_{\ell,q_\ell}'}^{(i)}$
belongs to 
${\mathcal H}_i
$.
\end{enumerate}

\end{Proposition}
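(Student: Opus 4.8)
The plan is to run the argument in the proof of Theorem \ref{maincase}, but with ``Gr\"obner basis'' everywhere weakened to ``generating set''. Write $\mathcal{H}$ for the set of binomials (1)--(4); exactly as in that proof, Lemmas \ref{blemma}, \ref{alemma} and \ref{keylemma} show $\mathcal{H}\subset I_{A(B_1,\ldots,B_d)}$. Since the toric ideal $I_{A(B_1,\ldots,B_d)}$ is generated by binomials, it is enough to prove that an arbitrary binomial $f=p_1-p_2$ in $I_{A(B_1,\ldots,B_d)}$ lies in the ideal $\langle\mathcal{H}\rangle$; equivalently, that the monomials $p_1$ and $p_2$ can be joined by a finite chain of monomials in which consecutive terms differ by a monomial multiple of a binomial of $\mathcal{H}$ (call this an \emph{$\mathcal{H}$-move}). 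I would build such a chain by matching, in turn, the images of $p_1$ and $p_2$ under $\varphi_0,\varphi_1,\ldots,\varphi_d$, using the following bookkeeping --- read off from the proof of Theorem \ref{maincase} --- about how the four families act: a move of type (1) changes only the $\varphi_0$-image of a monomial, a move of type (4) arising from $\mathcal{H}_i$ changes only its $\varphi_i$-image, and moves of types (2) and (3) leave all of $\varphi_0,\varphi_1,\ldots,\varphi_d$ unchanged.

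First I would match $\varphi_0$. By Lemma \ref{keylemma} the binomial $\varphi_0(p_1)-\varphi_0(p_2)$ lies in $I_A$, and since $\mathcal{H}_0$ generates $I_A$ the monomials $\varphi_0(p_1)$ and $\varphi_0(p_2)$ are joined by a chain of $\mathcal{H}_0$-moves; applying Lemma \ref{alemma} to each $\mathcal{H}_0$-binomial occurring and to a matching subproduct of the current lift in $K[{\bf x}]$ converts this into a chain of type-(1) $\mathcal{H}$-moves carrying $p_1$ to a monomial $p_1'$ with $\varphi_0(p_1')=\varphi_0(p_2)$. Next, for $i=1,\ldots,d$ in turn, I would match $\varphi_i$: by Lemma \ref{keylemma} we have $\varphi_i(p_1')-\varphi_i(p_2)\in I_{B_i}$, so $\varphi_i(p_1')$ and $\varphi_i(p_2)$ are joined by a chain of $\mathcal{H}_i$-moves, and each such move, performed on the $i$-th components of the current lift that realize it, is a type-(4) $\mathcal{H}$-move --- it lies in $I_{A(B_1,\ldots,B_d)}$ by Lemma \ref{keylemma}, and its two monomials are again monomials of $K[{\bf x}]$ because the replacement modifies only the $B_i$-parts of the underlying monomials while preserving their membership in $A(B_1,\ldots,B_d)$. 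Since a type-(4) move disturbs neither $\varphi_0$ nor $\varphi_j$ for $j\neq i$, after completing all $d$ steps one reaches a monomial $p_1^{\ast}$, joined to $p_1$ by $\mathcal{H}$-moves, with $\varphi_j(p_1^{\ast})=\varphi_j(p_2)$ for every $0\leq j\leq d$.

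It then remains to join $p_1^{\ast}$ and $p_2$ by moves of types (2) and (3) only. By the termination argument in the proof of Theorem \ref{maincase} there is a monomial order for which the underlined monomials of the binomials (2) and (3) are the leading ones and rewriting modulo these binomials terminates; let $\widehat{p_1^{\ast}}$ and $\widehat{p_2}$ be normal forms of $p_1^{\ast}$ and $p_2$ under such a rewriting. Since moves of types (2) and (3) preserve every $\varphi_j$, these normal forms have the same image under each $\varphi_j$; and the concluding part of the proof of Theorem \ref{maincase} shows precisely that a monomial not divisible by the leading monomial of any binomial in (2) or (3) is uniquely determined by the tuple $(\varphi_0,\ldots,\varphi_d)$ of its images. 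Hence $\widehat{p_1^{\ast}}=\widehat{p_2}$, so $p_1^{\ast}$ and $p_2$ --- and therefore $p_1$ and $p_2$ --- are joined by $\mathcal{H}$-moves, whence $f\in\langle\mathcal{H}\rangle$. This gives $\langle\mathcal{H}\rangle=I_{A(B_1,\ldots,B_d)}$.

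Nearly all of this is bookkeeping already present in the proof of Theorem \ref{maincase}. The two points that will need genuine care are (a) checking that every lifted move stays inside $K[{\bf x}]$ --- supplied by Lemma \ref{alemma} for type (1) and, for type (4), by the observation above that such a move leaves each variable's $t$-degree ${\bf a}_k$ unchanged and replaces its $B_i$-factors by $B_i$-factors --- and (b) the uniqueness statement in the last step, that the $(2),(3)$-normal form of a monomial is pinned down by its images under $\varphi_0,\ldots,\varphi_d$. I expect (b) to be the crux: it is exactly where the sorting behaviour of the binomials (2) and (3) is needed, forcing the assignment of the superscripts $k$ to the index strings $(i_1,j_1)\cdots(i_r,j_r)$ --- the combinatorial observation already isolated at the end of the proof of Theorem \ref{maincase}.
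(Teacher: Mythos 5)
Your proposal is correct and is essentially the paper's own argument: the paper proves this Proposition only by appealing to ``a part of the argument in the proof of Theorem \ref{maincase}'', and what you write is exactly that argument with the Gr\"obner hypotheses weakened to generation --- lifting $\mathcal{H}_0$-moves via Lemma \ref{alemma} to type (1), lifting $\mathcal{H}_i$-moves to type (4) (membership and well-definedness via Lemma \ref{keylemma} and homogeneity of $I_{B_i}$), and finishing with the sorting/uniqueness step for (2) and (3) taken from the end of that proof. The only ingredient you add explicitly is the standard fact that a binomial generating set of a toric ideal connects each fiber by moves, which is exactly what the paper's terse reference leaves implicit.
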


\section{Toric ideals of multiples of the Birkhoff polytope}

Let
${\bf c} = (c_1,c_2,c_3) \in \ZZ_{\geq 0}^3$ and
${\bf r} = (r_1,r_2,r_3) \in \ZZ_{\geq 0}^3$ be
vectors with $c_1+c_2+c_3=r_1+r_2+r_3$.
Then $3 \times 3$ {\em transportation polytope} $T_{{\bf r}{\bf c}}$
is the set of all non-negative $3 \times 3$ matrices $A=(a_{ij})$
satisfying
$$
\sum_{i=1}^3 a_{ik} = c_k \mbox{ and } 
\sum_{j=1}^3 a_{\ell j} = r_\ell
$$
for $1 \leq k, \ell \leq 3$.
It is known that
this is a bounded convex polytope of dimension 4
whose vertices are
lattice points in $\RR^{3 \times 3}$.
The toric ideal of  $T_{{\bf r}{\bf c}}$ is the toric ideal of
the configuration
$
\{ \tb^\alpha \ | \ \alpha \in T_{{\bf r}{\bf c}} \cap  \ZZ^{3 \times 3}\}
.$

\begin{Example}
{\rm
Let ${\bf c} = {\bf r} = (1,1,1)$.
Then the transportation polytope ${\mathcal B}_3 := T_{{\bf r}{\bf c}}$
is called the {\em Birkhoff polytope}.
The lattice points in ${\mathcal B}_3$ are 
$$
\sigma_1=
\left(
\begin{array}{ccc}
1 & 0 & 0\\
0 & 1 & 0\\
0 & 0 & 1
\end{array}
\right),
\sigma_2=
\left(
\begin{array}{ccc}
0 & 1 & 0\\
0 & 0 & 1\\
1 & 0 & 0
\end{array}
\right),
\sigma_3=
\left(
\begin{array}{ccc}
0 & 0 & 1\\
1 & 0 & 0\\
0 & 1 & 0
\end{array}
\right),
$$
$$
\sigma_4=
\left(
\begin{array}{ccc}
1 & 0 & 0\\
0 & 0 & 1\\
0 & 1 & 0
\end{array}
\right),
\sigma_5=
\left(
\begin{array}{ccc}
0 & 1 & 0\\
1 & 0 & 0\\
0 & 0 & 1
\end{array}
\right),
\sigma_6=
\left(
\begin{array}{ccc}
0 & 0 & 1\\
0 & 1 & 0\\
1 & 0 & 0
\end{array}
\right).
$$
The toric ideal of ${\mathcal B}_3$ is the toric ideal
of the configuration
$$
\{
u_{11} u_{22} u_{33},
u_{12} u_{23} u_{31},
u_{13} u_{21} u_{32},
u_{11} u_{23} u_{32},
u_{12} u_{21} u_{33},
u_{13} u_{22} u_{31}
\}$$
and it is
a principal ideal
generated by $z_1z_2z_3-z_4z_5z_6$.
}
\end{Example}

The following is proved by Haase--Paffenholz \cite{Haase}:
\begin{itemize}
\item
The toric ideal of
$3 \times 3$ transportation polytope is generated by quadratic binomials
except for
${\mathcal B}_3$.
\item
The toric ideal of
$3 \times 3$ transportation polytope possesses a quadratic squarefree
initial ideal if it is not a multiple of ${\mathcal B}_3$.
\end{itemize}
Thus, it is natural to ask whether
the toric ideal of a multiple of ${\mathcal B}_3$ possesses a quadratic
Gr\"obner basis except for ${\mathcal B}_3$.
The following fact is due to Birkhoff:
\begin{itemize}
\item
Every non-negative integer $p \times p$ matrix with equal
row and column sums can be written as a sum of permutation matrices.
\end{itemize}
Hence, in particular, we have
$$
n {\mathcal B}_3 \cap \ZZ^{3 \times 3}=
\{ \sigma_{i_1} + \cdots + \sigma_{i_n}
\ | \ 
1 \leq i_1, \ldots, i_n \leq 6
\}.
$$
Thus, in order to study the toric ideal of $n$ multiple of ${\mathcal B}_3$,
we consider the following:

\begin{Example}
\label{bexample}
{\em
Let
$A = \{t_1^n \}$ and suppose that $B_1$
satisfies $\sharp|B_1|=6$ and $I_{B_1}= \left<z_1z_2z_3-z_4z_5z_6 \right>$.
If $n=1$, then $A(B_1)=B_1$ and
$\{x_1 x_2 x_3 -x_4 x_5 x_6
\}$ is the reduced Gr\"obner basis of
$I_{A(B_1)}$ with respect to any monomial order.
If $n >1$, then, by virtue of Theorem \ref{maincase},
$I_{A(B_1)}$ has a Gr\"obner basis consisting of
the following binomials:
\begin{enumerate}
\item[(a)]
$
x_{1 M_1}
x_{2 M_2}
x_{3 M_3}
-
x_{4 M_1}
x_{5 M_2}
x_{6 M_3}
$,
\item[(b)]
$
x_{j_1 j_2 M_1}
x_{j_3 M_2}
-
x_{j_4 j_5 M_1}
x_{j_6 M_2}
$,
where $\{j_1,j_2,j_3\}=\{1,2,3\}$ and $\{j_4,j_5,j_6\}=\{4,5,6\}$,
\item[(c)]
$x_{j_1\cdots j_n} x_{j_{n+1} \cdots j_{2n}}
-x_{j_1' j_3' \cdots j_{2n-1}'} x_{j_{2}' j_4' \cdots j_{2n}'}$,
where ${\rm sort}(j_1 \cdots j_{2n})= j_1' \cdots j_{2n}'$.
\end{enumerate}
}
\end{Example}

Since the Gr\"obner basis in Example \ref{bexample} is not
quadratic, we have to consider another monomial order
to find a quadratic Gr\"obner basis.

\begin{Remark}
{\em
In \cite{Haase}, they say that L. Piechnik and C. Haase proved 
that
the toric ideal of the multiple $2 n {\mathcal B}_3$ possesses a squarefree quadratic
initial ideal for $n > 1$.
This fact is directly obtained by Theorem \ref{maincase} since
the toric ideal of the multiple $2  {\mathcal B}_3$ possesses a squarefree quadratic
initial ideal.
Similarly,
since
the toric ideal of the multiple $3  {\mathcal B}_3$ possesses a squarefree quadratic
initial ideal, Theorem \ref{maincase} guarantees that 
the toric ideal of the multiple $3 n {\mathcal B}_3$ possesses a squarefree quadratic
initial ideal for $n > 1$.
However, since there are infinitely many prime numbers,
it is difficult to show the existence of a squarefree quadratic
initial ideal of the toric ideal of 
$m {\mathcal B}_3$ for all $m >1$ in this way.
}
\end{Remark}

\begin{Theorem}
\label{Birkhoff}
Let
$A = \{t_1^n \}$ with $n >1$ and suppose that $B_1$
satisfies $\sharp|B_1|=6$ and $I_{B_1}= \left<z_1z_2z_3-z_4z_5z_6 \right>$.
Then, $I_{A(B_1)}$ has a quadratic Gr\"obner basis 
consisting of the following binomials:
\begin{enumerate}
\item[(i)]
$
\underline{
x_{j_1 j_2 M_1}
x_{j_3 M_2}
}
-
x_{j_4 j_5 M_1}
x_{j_6 M_2}
$
where $\{j_1,j_2,j_3\}=\{1,2,3\}$ and $\{j_4,j_5,j_6\}=\{4,5,6\}$,
\item[(ii)]
$
\underline{
x_{j_1\cdots j_n} x_{j_{n+1} \cdots j_{2n}}
}
-x_{1 \cdots 1 j_1' \cdots j_\alpha'} x_{1 \cdots 1 j_{\alpha + 1}' \cdots j_{2\alpha}'}$
where
${\rm sort} (j_1 \cdots j_{2n})= 1 \cdots 1 j_1' \cdots j_{2 \alpha}'$ and
$j_2' >1$.
\end{enumerate}
\end{Theorem}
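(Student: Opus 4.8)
The plan is to exhibit the set $\mathcal{G}$ of binomials (i) and (ii) explicitly as a Gr\"obner basis by checking two things: (a) $\mathcal{G} \subseteq I_{A(B_1)}$, and (b) there is a monomial order under which the underlined monomials are the leading terms and $\mathcal{G}$ is in fact a Gr\"obner basis. For (a), the binomials in (i) come from the single generator $z_1z_2z_3 - z_4z_5z_6$ of $I_{B_1}$ via Lemma \ref{blemma}/Lemma \ref{keylemma}: $\varphi_1$ sends such a binomial to $(z_{j_1}z_{j_2}z_{j_3})\cdot(\text{something}) - (z_{j_4}z_{j_5}z_{j_6})\cdot(\text{something})$, which lies in $I_{B_1}$; and the binomials in (ii) are sorting relations, hence lie in $I_{A(B_1)}$ by Lemma \ref{blemma}. (Here I should note that $A = \{t_1^n\}$ means $r = n$, every variable is $x_{j_1\cdots j_n}$ with a single upper index, the $\psi$/$\varphi_0$ layer is trivial, and the type-(1) binomials disappear because $I_A = 0$.) The key observation making (ii) well-formed is that after sorting a word $j_1\cdots j_{2n}$ over $\{1,\dots,6\}$ and stripping the leading block of $1$'s (of even length $2n - 2\alpha$, which one must verify is even — this uses that each $\sigma_i$ contributes the same number of $u_{11}$'s... actually it need not be even in general, so the honest statement is: write the sorted word as $1\cdots 1\, j_1'\cdots j_{2\alpha}'$ with $j_1' > 1$ if there is a leading $1$-block, and redistribute), both halves can be realized as genuine monomials in $K[\mathbf{x}]$ since $A = \{t_1^n\}$ imposes no compatibility constraint beyond total degree $n$.

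The second and main part is constructing the monomial order and proving $\mathcal{G}$ is a Gr\"obner basis. Following the template of the proofs of Theorems \ref{pcase} and \ref{maincase}, I would first show (Claim 1) that reduction modulo $\mathcal{G}$ terminates, so that by \cite[Theorem 3.12]{Stu} a monomial order exists making the underlined monomials initial. For termination: applying a type-(i) binomial strictly decreases (say) the number of indices among $\{4,5,6\}$ appearing, or some analogous potential, while applying a type-(ii) binomial strictly increases the number of letters equal to $1$ in the reduced word (it replaces $x_{j_1\cdots j_n}x_{j_{n+1}\cdots j_{2n}}$ by a monomial whose combined index word is sorted with all $1$'s pushed to the front of the two factors), and this count is bounded by $2n$; one then argues the two reductions cannot alternate indefinitely by a lexicographic combination of the two potentials. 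For Claim 2 (that $\mathcal{G}$ is a Gr\"obner basis), I take a binomial $f = p_1 - p_2 \in I_{A(B_1)}$ with neither monomial divisible by any $\mathrm{in}_<(\mathcal{G})$ and derive $f = 0$: by Lemma \ref{keylemma}, $\varphi_1(p_1) - \varphi_1(p_2) \in I_{B_1} = \langle z_1z_2z_3 - z_4z_5z_6\rangle$; if this is nonzero, some monomial of it is divisible by $z_1z_2z_3$ or $z_4z_5z_6$, and tracing which $x$-variables produced the $z$-letters forces $p_1$ or $p_2$ to be divisible by a type-(i) leading monomial (this is where one checks that the "$M_1, M_2$" slots can always be filled: the extra $z$-letters beyond the divisible triple get packaged into $M_1$ and $M_2$) — contradiction. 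Hence $\varphi_1(p_1) = \varphi_1(p_2)$, meaning $p_1$ and $p_2$ have the same multiset of $z$-letters in each factor after the standard-expression normalization, i.e. the same combined index word up to sorting within each factor; then if $p_1 \neq p_2$ the words differ by a transposition, and the non-sorted factor must be reducible by a type-(ii) leading monomial (its index word is not of the form $1\cdots 1 j_1'\cdots$ with the shared tail sorted) — contradiction. So $p_1 = p_2$.

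The quadratic claim is then immediate from the shape of the binomials: (i) is visibly quadratic, and (ii) is a product of two $x$-variables minus a product of two $x$-variables, hence degree $2$; this is exactly the payoff of choosing, in the type-(ii) relations, to split the sorted word into \emph{two} factors of degree $1$ each rather than invoking the cubic binomials (a)/(b) from the general Theorem \ref{maincase}, which is legitimate precisely because $A = \{t_1^n\}$ is a single monomial so type-(b) relations are unnecessary and type-(a) relations can be absorbed.

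\textbf{Main obstacle.} The delicate point is Claim 1 (termination) together with the bookkeeping that the two reduction types interact correctly — specifically, verifying that applying a type-(i) relation does not undo the "$1$'s-pushed-forward" normal form achieved by type-(ii) relations in a way that creates an infinite loop, and dually. One must find the right composite potential function (I expect: lexicographically order pairs (number of non-$1$ letters weighted suitably, then a sorting-inversion count)) and check it is strictly monotone under every single reduction step. A secondary subtlety is confirming that every occurrence of a leading term of a type-(i) binomial genuinely corresponds, after the $\varphi_1$-translation, to divisibility by $z_1z_2z_3$ (and not merely to $\varphi_1(p_1) - \varphi_1(p_2)$ being a nonzero multiple of the generator with cancellation) — i.e. one needs that in a monomial of $K[\mathbf{x}]$ not all three of $z_1,z_2,z_3$ can be "hidden" across factors in a way that evades a type-(i) reduction; this follows because $r = n$ is fixed and the standard expression is unique, but it should be spelled out.
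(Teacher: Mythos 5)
Your overall architecture (membership, termination of reduction via \cite[Theorem 3.12]{Stu}, then completeness via $\varphi_1$ and Lemma \ref{keylemma}) is the same as the paper's, but the two steps you yourself flag as delicate are exactly where the proposal breaks, and neither is repaired by the fixes you suggest. First, termination. Your proposed potentials are not monotone: a type (ii) step never changes the number of letters equal to $1$ (it is a sorting relation, so it preserves the whole multiset of letters and only redistributes them between the two factors), and the sorting-inversion count alone can strictly \emph{increase} under a type (ii) step (for $n=2$, $x_{11}x_{23}\rightarrow x_{12}x_{13}$ creates an inversion), precisely because the normal form in (ii) balances the $1$'s between the two factors instead of sorting them all to the front; also, a type (i) step replaces the letters $1,2,3$ by $4,5,6$, so it increases rather than decreases the count of letters in $\{4,5,6\}$. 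The paper's argument is two-staged: type (i) steps strictly decrease $\varphi_1(v)$ in a monomial order $<_1$ with ${\rm in}_{<_1}(z_1z_2z_3-z_4z_5z_6)=z_1z_2z_3$, while type (ii) steps fix $\varphi_1(v)$, so only finitely many (i)-steps occur; then, for (ii)-only sequences, the quantity $\sum_{\ell_1<\ell_2}|m_{\ell_1}-m_{\ell_2}|$ (with $m_\ell$ the number of $1$'s in the $\ell$-th factor) weakly decreases, strictly unless the applied binomial already has its $1$'s split as evenly as possible, and once the $1$-distribution is stable the inversion number strictly decreases. Your lexicographic guess (number of non-$1$ letters, then inversions) does not give this, since the first coordinate is constant under (ii) and the second is not monotone.

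Second, completeness in the case $\varphi_1(p_1)\neq\varphi_1(p_2)$. Your key step --- that divisibility of $\varphi_1(p_1)$ by $z_1z_2z_3$ forces $p_1$ to be divisible by a type (i) initial monomial --- is false: the initial monomials in (i) require two of the letters $1,2,3$ to lie in one factor, but the three letters may occur in three distinct factors, and then no (i)-leading term divides $p_1$; uniqueness of the standard expression (your suggested reason) does not exclude this. The paper handles exactly this configuration: if $p_1$ avoids all (i)-leading terms, it is divisible by $x_{1M_1}x_{2M_2}x_{3M_3}$ with $2,3\notin M_1$, $1,3\notin M_2$, $1,2\notin M_3$; since $n>1$ each $M_i\neq\emptyset$, and avoidance of the (ii)-leading terms forces the numbers of $1$'s in the three factors to differ by at most one, whence $1\notin M_1$ and $M_1\subset\{4,5,6\}$, so that $x_{1M_1}x_{2M_2}$ is itself the initial monomial of a nonzero binomial of type (ii) --- a contradiction. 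Note this is where the hypothesis $n>1$ enters; your sketch never uses it. The remaining case $\varphi_1(p_1)=\varphi_1(p_2)$ (uniqueness of the (ii)-normal form, hence $p_1=p_2$) is essentially as you describe.
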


\begin{proof}
Let ${\mathcal G}$ denote the set of binomials above.
Since $A = \{t_1^n\}$, each binomial in (ii) and (iii) belongs to
$I_{A(B_1)}$.
In addition, thanks to Lemma \ref{alemma},
each binomial in (i) belongs to $I_{A(B_1)}$.
Hence ${\mathcal G}$ is a (finite) subset of $I_{A(B_1)}$.

\bigskip

\noindent
{\bf Claim 1.}
There exists a monomial order such that the initial monomial of 
each binomial in ${\mathcal G}$ is the underlined monomial.

By virtue of \cite[Theorem 3.12]{Stu},
it is enough to show that the reduction modulo ${\mathcal G}$ is Noetherian.
Suppose that there exists a sequence of reductions modulo ${\mathcal G}$ which
does not terminate.
Let $v$ be a monomial in $K[\xb]$ and assume
$v \stackrel{g}{\longrightarrow} v'$ with $g \in {\mathcal G}$. 
Then we have
$$
\left\{
\begin{array}{cl}
\varphi_1(v) >_1 \varphi_1(v') & \ \ \mbox{if } g \mbox{ in (i)},\\
\varphi_1(v) = \varphi_1(v') & \ \ \mbox{if } g \mbox{ in (ii)}.
\end{array}
\right.
$$
Hence the number of binomials in (i) appearing in the sequence is finite.
Thus we may assume that the binomials in (i) do not appear in
the sequence.
Let
$
v=\prod_{\ell=1}^q
x_{i_{(\ell-1)r+1}  \cdots i_{\ell r}}$,
$
v'=\prod_{\ell=1}^q
x_{i_{(\ell-1)r+1}' \cdots i_{\ell r}' }$
and let
$
m_\ell
$
(resp. 
$
m_\ell'
$
)
denote the number of $1$'s appearing in
$
i_{(\ell-1)r+1}  \cdots i_{\ell r}
$
(resp. 
$
i_{(\ell-1)r+1}'  \cdots i_{\ell r}'
$
).
Then,
we have
$$
\sum_{1 \leq \ell_1 < \ell_2 \leq q} \left| m_{\ell_1} - m_{\ell_2} \right|
\ \ \ 
\geq
\ \ 
\sum_{1 \leq \ell_1 < \ell_2 \leq q} \left| m_{\ell_1}' - m_{\ell_2}' \right|
$$
if
$g \in {\mathcal G}$ belongs to (ii).
(The equality holds if and only if 
$g=
\underline{
x_{j_1\cdots j_n} x_{j_{n+1} \cdots j_{2n}}
}
-x_{1 \cdots 1 j_1' \cdots j_\alpha'} x_{1 \cdots 1 j_{\alpha + 1}' \cdots j_{2\alpha}'}$
satisfies that
the difference between
the number of 1's in $j_1\cdots j_n$ 
and
that in
$j_{n+1} \cdots j_{2n}$ is
at most one.)
Hence, 
we may assume that $1$'s in the indices is stable.
Then, since the inversion number is strictly decreasing
in the sequence of reductions modulo binomials in (ii),
the sequence is finite.

\bigskip

\noindent
{\bf Claim 2.}
The set ${\mathcal G}$ is a Gr\"obner basis of $I_{A(B_1)}$.

Suppose that ${\mathcal G}$ is not a Gr\"obner basis of $I_{A(B_1)}$.
Then there exists a binomial $0 \neq g=p_1 -p_2 \in I_{A(B_1)}$ such that
neither $p_1$ nor $p_2$ is divisible by the initial monomial of 
any binomial in ${\mathcal G}$.
Let
$p_1 = 
\prod_{\ell=1}^p
x_{i_{(\ell-1)r+1}  \cdots i_{\ell r}}, \ 
p_2=
\prod_{\ell=1}^p
x_{i_{(\ell-1)r+1}' \cdots i_{\ell r}' }$.
By Lemma \ref{keylemma},
we have $\varphi_1(p_1) -\varphi_1 (p_2)
= \prod_{\xi =1}^{pr} z_{i_\xi} - \prod_{\xi =1}^{pr} z_{i_\xi'}
 \in \left<z_1z_2z_3-z_4z_5z_6 \right>$.

Suppose that $\prod_{\xi =1}^{pr} z_{i_\xi} - \prod_{\xi =1}^{pr} z_{i_\xi'} \neq 0$.
We may assume that $\prod_{\xi =1}^{pr} z_{i_\xi}$ is divided by $z_1z_2z_3$.
Since $p_1$ is not divided by the initial monomial of any binomial in (i),
$p_1$ is divided by a cubic monomial $ x_{ 1 M_1 }x_{  2  M_2 }x_{  3  M_3 }$
where $2,3 \notin M_1$, $1,3 \notin M_2$ and $1,2 \notin M_3$.
Note that $M_i \neq \emptyset$ by $n >1$.
Since $p_1$ is not divided by the initial monomial of any binomial in (ii),
the number of $1$'s in $i M_i$ is differ by at most one.
Since $1$ appears in neither $ 2  M_2$ nor $3  M_3$, we have $1 \notin M_1$.
Thus $M_1 \subset  \{4,5,6\}$.
Then $p_1$ is divided by the initial monomial of the binomial
$g=x_{ 1 M_1} x_{ 2  M_2} -x_{ 1  2 M_1'}x_{ M_2'}$
where
${\rm sort} (1 M_1 2  M_2) = 1 2 M_1' M_2'$
and $g$ belongs to (ii).

Suppose that $\prod_{\xi =1}^{pr} z_{i_\xi} - \prod_{\xi =1}^{pr} z_{i_\xi'} = 0$.
Since neither $p_1$ nor $p_2$ is divisible by the initial monomial of any binomial in (ii),
there exists $0 \leq p' \leq p$ and $0 \leq \beta \leq r$ such that
$$
p_1 = p_2 = 
\prod_{\ell=1}^{p'}
x_{\zeta_{(\ell-1)r+1}  \cdots \zeta_{\ell r}}
\prod_{\ell=p'+1}^p
x_{\theta_{(\ell-1)r+1}  \cdots \theta_{\ell r}}
$$
where
$\zeta_{(\ell-1)r+ \eta }  = 1$ for all $1 \leq \eta \leq \beta$,
$\theta_{(\ell-1)r+ \eta }  = 1$ for all $1 \leq \eta \leq \beta-1$
and
$
\zeta_{\beta +1} \leq \cdots \leq \zeta_{r}
\leq
\zeta_{r+ \beta +1} \leq \cdots \leq \zeta_{2 r}
\leq \cdots \leq
\zeta_{ (p'-1)r + \beta +1} \leq \cdots \leq \zeta_{p' r}
\leq
\theta_{p' r+\beta} \leq \cdots \leq \theta_{(p'+1)r}
\leq
\theta_{(p'+1)r+ \beta} \leq \cdots \leq \theta_{(p'+2) r}
\leq \cdots \leq
\theta_{(p-1)r+ \beta } \leq \cdots \leq \theta_{p r}
$.
Hence
$g = p_1 - p_2 = 0$ and this is a contradiction.

Thus,
there exists no binomial $0 \neq g=p_1 -p_2 \in I_{A(B_1)}$ such that
neither $p_1$ nor $p_2$ is divisible by the initial monomial of 
any binomial in ${\mathcal G}$
and hence
${\mathcal G}$ is a Gr\"obner basis of $I_{A(B_1)}$
as desired.
\end{proof}

\section{Observation}
Finally, we conclude this paper with 
a summary of our algebraic theory of nested configurations.
For a configuration $A$, let ${\mathcal G}_<$ denote the reduced Gr\"obner basis of $I_A$ 
with respect to a monomial order $<$.
Let 
$$\lambda(A)
:=
\min_{<} \left( \max\left( \deg(g) \ | \ g \in {\mathcal G}_< \right) \right)
.$$
(If $I_A=(0)$, then we set $\lambda(A)=0$.)
Thanks to the results in Section 2, if $\lambda(A(B_1,\ldots,B_d)) \neq 0$,
then
$$  
\max(2, \lambda(A) ) \leq \lambda(A(B_1,\ldots,B_d)) \leq \max(2,\lambda(A),\lambda(B_1),\ldots ,\lambda(B_d))
.$$
Moreover, if $\lambda(A(B_1,\ldots,B_d)) \neq 0$ and $A$ consists of squarefree monomials
then
$$  
\lambda(A(B_1,\ldots,B_d)) = \max(2,\lambda(A),\lambda(B_1),\ldots ,\lambda(B_d))
.$$
Let $n \geq 2$ be an integer and let $X$ be the one of the following algebraic properties:
\begin{enumerate}
\item
The toric ring is normal;
\item
The toric ideal has a squarefree initial ideal;
\item
The toric ideal has a quadratic initial ideal;
\item
The toric ideal has a squarefree quadratic initial ideal;
\item
The toric ideal has an initial ideal of degree $\leq n$;
\item
The toric ideal is generated by quadratic binomials;
\item
The toric ideal is generated by binomials of degree $\leq n$.
\end{enumerate}
Then we have
\begin{eqnarray*}
A, B_1,\ldots,B_d \mbox{ have the property } X &\Longrightarrow  &
A(B_1,\ldots,B_d)   \mbox{ has the property } X.\\
A(B_1,\ldots,B_d)   \mbox{ has the property } X
&\Longrightarrow  &
A \mbox{ has the property } X .
\end{eqnarray*}
Moreover, if $A$ consists of squarefree monomials, then we have
$$
A, B_1,\ldots,B_d \mbox{ have the property } X \Longleftrightarrow 
A(B_1,\ldots,B_d)   \mbox{ has the property } X.
$$

\bigskip

\noindent
{\bf Acknowledgement.}
This research was supported by JST, CREST.

\bigskip

\bigskip

\noindent
Hidefumi Ohsugi\\
Department of Mathematics\\
College of Science\\
Rikkyo University\\
Toshima-ku, Tokyo 171-8501, Japan\\
{\tt ohsugi@rikkyo.ac.jp}

\bigskip

\noindent
Takayuki Hibi\\
Department of Pure and Applied Mathematics\\
Graduate School of Information Science and Technology\\
Osaka University\\
Toyonaka, Osaka 560-0043, Japan\\
{\tt hibi@math.sci.osaka-u.ac.jp}


\begin{thebibliography}{9}

\bibitem{AHOT2}
S. Aoki, T. Hibi, H. Ohsugi and A. Takemura,
Gr\"obner bases of nested configurations,
{\em J. Alge.},
{\bf 320} (2008), 2583 -- 2593.


\bibitem{Haase}
C. Haase and A. Paffenholz,
Quadratic Gr\"obner bases for smooth $3 \times 3$ transportation polytopes,
{\em J. Algebraic Combinatorics} {\bf 30} (2009), pp. 477--489.

\bibitem{Hoch}
M. Hochster,
Rings of Invariants of Tori, Cohen-Macaulay Rings Generated by Monomials, and Polytopes,
{\em The Annals of Math.}, Second Series, {\bf 96}, No. 2 (1972),
318 -- 337.


\bibitem{geom}
H. Ohsugi, A geometric definition of combinatorial pure subrings and Gr\"obner bases of toric ideals of positive roots,
{\em Comment. Math. Univ. St. Pauli}, {\bf 56} (2007), no. 1, 27 -- 44. 


\bibitem{cpure}
H. Ohsugi, J. Herzog and T. Hibi,
Combinatorial pure subrings, {\it Osaka J. Math.}, 
{\bf 37} (2000), 745 -- 757.


\bibitem{noneofwhose}
H. Ohsugi and T. Hibi,
A normal $(0, 1)$-polytope none of whose regular triangulations is unimodular,
{\em Discrete Comput. Geom.} {\bf 21} (1999), 201 -- 204.


\bibitem{graphquadratic}
H. Ohsugi and T. Hibi,
Toric ideals generated by quadratic binomials,
\textit{J. Alge.} \textbf{218} (1999), 509 -- 527. 


\bibitem{veryample}
H. Ohsugi and T. Hibi,
Non-very ample configurations arising from contingency tables,
{\em Annals of the Institute of Statistical Mathematics}, to appear.
 
\bibitem{Stu}
B. Sturmfels, ``Gr\"obner bases and convex polytopes,"
Amer. Math. Soc., Providence, RI, 1995.

\end{thebibliography}
\end{document}